\documentclass[11pt]{amsart}

\usepackage[T1]{fontenc}
\usepackage{lmodern}
\usepackage{amsmath,amssymb,amsthm,mathtools}
\usepackage{enumitem}
\usepackage{microtype}
\usepackage[colorlinks=true,linkcolor=blue,citecolor=blue,urlcolor=blue]{hyperref}
\usepackage[margin=1.15in]{geometry}
\allowdisplaybreaks

\newtheorem{theorem}{Theorem}[section]
\newtheorem{proposition}[theorem]{Proposition}
\newtheorem{lemma}[theorem]{Lemma}
\newtheorem{corollary}[theorem]{Corollary}
\theoremstyle{definition}
\newtheorem{definition}[theorem]{Definition}

\theoremstyle{remark}
\newtheorem{remark}[theorem]{Remark}

\newcommand{\Z}{\mathbb Z}

\newcommand{\C}{\mathbb C}
\newcommand{\Zp}{\mathbb Z_p}

\newcommand{\Gcal}{\mathcal G}
\newcommand{\Ical}{\mathcal I}

\newcommand{\Div}{\operatorname{Div}}
\newcommand{\Princ}{\operatorname{Pr}}
\newcommand{\Pic}{\operatorname{Pic}}
\newcommand{\Fitt}{\operatorname{Fitt}}
\newcommand{\Ann}{\operatorname{Ann}}
\newcommand{\charideal}{\operatorname{char}}
\newcommand{\val}{\operatorname{val}}

\newcommand{\coker}{\operatorname{coker}}
\newcommand{\im}{\operatorname{im}}
\newcommand{\kerop}{\operatorname{ker}}

\newcommand{\nr}{\mathrm{unr}}
\newcommand{\ram}{\mathrm{ram}}
\newcommand{\Norm}[1]{N_{#1}}
\newcommand{\aug}[1]{\Delta_{#1}}
\newcommand{\LambdaG}{\Lambda(\Gcal)}
\newcommand{\wtI}{\widetilde I}
\newcommand{\wtalpha}{\widetilde\alpha}
\newcommand{\wtIcal}{\widetilde{\mathcal I}}
\newcommand{\ThetaStar}{\Theta^{\ast}}

\title[Equivariant main conjecture for branched graph towers]{An Equivariant Main Conjecture for Branched $\mathbb Z_p$-Towers of Finite Graphs and Applications}

\author{Rusiru Gambheera}
\address{Morningside Center of Mathematics, Academy of Mathematics and Systems Science, Chinese Academy of Sciences, Beijing, China}
\email{gambheera@amss.ac.cn}

\subjclass[2020]{Primary 11R23, 05C25; Secondary 11M41, 05C31}
\keywords{Iwasawa theory of graphs, branched covers, Picard groups, Fitting ideals, Ihara $L$-functions, equivariant main conjecture}

\begin{document}

\begin{abstract}
Let $p$ be a prime and let $Y/X$ be a finite abelian branched cover of finite connected graphs. We consider a compatible branched $\mathbb Z_p$-tower above $Y$ whose total Galois group over $X$ is abelian. We construct an equivariant Picard Iwasawa module over the completed group ring of the total Galois group and give an explicit presentation of this module. On the analytic side, we construct an equivariant Artin--Ihara $L$-element at the top of the tower and isolate a modified factor whose value at $u=1$ is well defined. Our main result computes the Fitting ideal of the equivariant Picard Iwasawa module in terms of these special values and augmentation ideals determined by the inertia groups. As applications, we recover the main conjecture for branched $\mathbb Z_p$-towers from our earlier work and obtain finite-level Fitting-ideal and annihilation formulas which can be viewed as graph-theoretic analogues of the conjectures of Kurihara and Brumer--Stark.

\end{abstract}

\maketitle

\section{Introduction}

Let $p$ be a rational prime. For a finite connected graph $X$, the Picard
group of degree zero $\Pic^0(X)$ is a finite abelian group whose order is the
number of spanning trees of $X$. In the analogy between number theory and
graph theory, this group plays the role of the ideal class group of a number
field, while Ihara zeta and $L$-functions play the role of Dedekind zeta and
Artin $L$-functions. This analogy suggests studying the $p$-primary parts of
Picard groups in towers of finite connected graphs.

The Iwasawa theory of graphs was developed in the unramified setting by
Valli\`eres~\cite{Val21} and Gonet~\cite{Gonet22}, using Ihara $L$-functions
and Iwasawa modules, respectively. In~\cite{GV24}, we initiated the study of
branched $\Zp$-towers of finite connected graphs arising from voltage
assignments. For such a tower $(X_n)_{n\geq 0}$, we proved that there exist
$\mu,\lambda\in\Z_{\geq 0}$ and $\nu\in\Z$ such that
\[
\operatorname{ord}_p\bigl(|\Pic^0(X_n)|\bigr)
=\mu p^n+\lambda n+\nu
\]
for all sufficiently large $n$. Our proof was module theoretical.
In~\cite{GV26}, we developed the equivariant Ihara formalism needed to study
these towers analytically and related the characteristic ideal of the
Picard Iwasawa module to special values of Ihara $L$-functions. Fitting
ideals for finite and infinite unramified abelian covers were studied by
Kataoka in~\cite{Kataoka}.

The theory has also been developed in several directions.
DuBose and Valli\`eres studied $\mathbb Z_p^d$-towers~\cite{DV23},
Kleine and M\"uller proved a main conjecture for noncommutative
voltage covers~\cite{KleM26}, and Adachi, Mizuno, and Tateno
extended the theory to weighted graphs~\cite{AMT26}.
Kataoka established an analogue of Kida's formula for ramified
graph coverings~\cite{Kat25}.
Lei and Valli\`eres investigated spanning-tree numbers at primes
different from the tower prime~\cite{LV23}, while Lei and M\"uller
studied towers of isogeny graphs with level
structures~\cite{LM25} and formulated and proved main conjectures
for abelian towers of digraphs~\cite{LM26}.
Kundu and M\"uller studied ramified $\mathbb Z_p^d$-towers and
dual graphs~\cite{KuM26}, and developed a combinatorial approach
explaining how the location of ramified vertices affects
spanning-tree counts and Iwasawa invariants~\cite{KuM25}.

The goal of this paper is to prove an equivariant main conjecture for
branched $\Zp$-towers. Starting with a finite abelian branched cover $Y/X$,
we consider a compatible $\Zp$-tower above $Y$ and retain the action of the
entire group of the cover $Y_\infty/X$. We compute the Fitting ideal of the
resulting Picard Iwasawa module over the completed group ring of this
group. The formula involves special values of modified equivariant Ihara
$L$-polynomials and augmentation ideals determined by vertex inertia.

More precisely, let $Y/X$ be a finite abelian branched cover of finite
connected graphs with group $G$, and consider a tower
\[
Y=Y_0\longleftarrow Y_1\longleftarrow\cdots
\longleftarrow Y_n\longleftarrow\cdots\longleftarrow Y_\infty.
\]
We assume that the total group $\Gcal$ is abelian and fits into an exact
sequence
\[
1\longrightarrow\Gamma\longrightarrow\Gcal
\longrightarrow G\longrightarrow 1,
\qquad \Gamma\simeq\Zp.
\]
The tower is obtained from compatible voltage assignments and inertia
groups as in Section~3, and every finite graph $Y_n$ is assumed to be
connected. Writing $\Gamma_n=\Gamma^{p^n}$ and
$\Gcal_n=\Gcal/\Gamma_n$, we put
\[
\LambdaG=\Zp[[\Gcal]]
=\varprojlim_n\Zp[\Gcal_n].
\]
The equivariant Picard Iwasawa module is
\[
\Pic_{\LambdaG}(Y_\infty)
=\varprojlim_n\Pic_p(Y_n),
\qquad
\Pic_p(Y_n)=\Pic(Y_n)\otimes_{\Z}\Zp,
\]
where the transition maps are induced by the covering maps.

For each $v\in V_X$, let $\wtI_v\leq\Gcal$ be the stabilizer of a vertex of
$Y_\infty$ above $v$. We distinguish the vertices with nontrivial finite
inertia from those with infinite inertia by putting
\[
\begin{aligned}
B&=\{v\in V_X:1\ne\wtI_v\text{ is finite}\},\\
C&=\{v\in V_X:\wtI_v\text{ is infinite}\}.
\end{aligned}
\]
The remaining vertices are unramified. For a closed subgroup
$J\leq\Gcal$, we write
\[
\aug{J}=(j-1:j\in J)\subseteq\LambdaG
\]
for its augmentation ideal.

On the analytic side, we use the determinant formulas
of~\cite{GV26} to construct an equivariant Ihara $L$-element
\[
\Theta_{Y_\infty/X}(u)\in\LambdaG[[u]].
\]
One difficulty is that Ihara $L$-functions do not satisfy the inflation
property in general for branched covers. Thus compatibility of the
finite-level $L$-elements requires a separate argument. We obtain this
compatibility after passing to quotients by ideals generated by the norm
elements of the infinitely ramified vertices; these ideals have zero
inverse limit. If
$\ell=|V_X|$ and $m=|E_X|/2$, the resulting element has a factorization
\[
\Theta_{Y_\infty/X}(u)
=(1-u^2)^{m-\ell+|C|}
\ThetaStar_{Y_\infty/X}(u).
\]
Here $\ThetaStar_{Y_\infty/X}(u)\in\LambdaG[u]$ is a determinant
obtained by deleting the rows and columns corresponding to vertices in
$C$ from the norm-weighted Ihara matrix. In particular, its value at
$u=1$ is well defined.

For $B'\subseteq B$, let $Y_\infty^{B'}$ be the modified tower obtained
by replacing $\wtI_v$ by $\Gcal$ at the vertices $v\in B'$, while leaving
the other inertia groups and the voltage assignment unchanged. Thus these
vertices become totally ramified, and the set of infinitely ramified
vertices of $Y_\infty^{B'}/X$ is $B'\cup C$. We write
$\ThetaStar_{Y_\infty^{B'}/X}(u)$ for its modified equivariant
$L$-polynomial. Our main result is the following.

\begin{theorem}[Equivariant main conjecture; Theorem~\ref{thm:EMC}]
\label{thm:introduction-EMC}
With the notation above, one has the following equality of ideals in
$\LambdaG$:
\[
\Fitt_{\LambdaG}\!\left(\Pic_{\LambdaG}(Y_\infty)\right)
=
\left(
\ThetaStar_{Y_\infty^{B'}/X}(1)
\prod_{v\in B'\cup C}\aug{\wtI_v}
\ ;\ B'\subseteq B
\right).
\]
\end{theorem}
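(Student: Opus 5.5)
The plan is to compute the Fitting ideal from the explicit presentation of $\Pic_{\LambdaG}(Y_\infty)$ mentioned in the introduction, and then to identify the resulting minors with the special values $\ThetaStar_{Y_\infty^{B'}/X}(1)$. I expect that presentation to have the form
\[
\bigoplus_{v\in V_X}\LambdaG/\aug{\wtI_v}\cdot\Norm{}\ \xrightarrow{\ \Delta\ }\ \bigoplus_{v\in V_X}\LambdaG\otimes_{\Lambda(\wtI_v)}\Zp\ \longrightarrow\ \Pic_{\LambdaG}(Y_\infty)\longrightarrow 0 .
\]
Here the middle term is the limit of the divisor modules $\Zp[V_{Y_n}]$. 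Its $v$-summand is the induced module $\LambdaG/\aug{\wtI_v}$ when $\wtI_v$ is finite; for $v\in C$ the finite-level norm elements have zero inverse limit. The map $\Delta$ is the limit of the equivariant Laplacian, which is the norm-weighted Ihara matrix evaluated at $u=1$.

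First, I would resolve each cyclic summand $\LambdaG/\aug{\wtI_v}$ by free modules. For $v$ unramified it is free. For $v\in B$ the group $\wtI_v$ is finite, but it need not be cyclic, so I would only use the first step of a free resolution. For $v\in C$, the group $\wtI_v$ is infinite and hence open in $\Gcal$. Its augmentation ideal then acts as a "degree-zero" correction; this is exactly why the $C$-rows and columns are deleted in defining $\ThetaStar$.

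This gives a presentation of $\Pic_{\LambdaG}(Y_\infty)$ by a block matrix. It consists of the $\ell\times\ell$ matrix $A=\Delta$, in which every column $v$ with nontrivial inertia may be replaced by the column $A_v$ multiplied by generators of $\aug{\wtI_v}$, together with relation columns generated by the elements $\tau-1$, $\tau\in\wtI_v$, for $v\in B$. The columns for $v\in C$ are handled the same way.

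Second, I would compute the Fitting ideal, namely the ideal of maximal minors, by a Laplace-type expansion. For each $v\in B$, a maximal minor either uses the augmentation relation in row $v$ or uses the Laplacian column. I would encode this choice by a subset $B'\subseteq B$. The vertices in $B'$ are those where the norm column is chosen, and on these vertices the effective inertia becomes all of $\Gcal$; this replaces $\wtI_v$ by $\Gcal$, which is precisely the tower $Y_\infty^{B'}$. The vertices in $C$ always contribute through their augmentation ideal. The minor attached to $B'$ should then equal
\[
\det\bigl(\text{Laplacian with rows and columns in }B'\cup C\text{ deleted}\bigr)\prod_{v\in B'\cup C}\aug{\wtI_v}.
\]
By the factorization $\Theta=(1-u^2)^{m-\ell+|C|}\ThetaStar$ applied to the modified tower, this determinant is $\ThetaStar_{Y_\infty^{B'}/X}(1)$. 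The key input is the determinant formula from~\cite{GV26}, which converts the Ihara determinant at $u=1$ into the deleted Laplacian determinant.

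Third, I need the inclusion in both directions. The inclusion "$\supseteq$" is automatic, since each displayed product is a minor. For "$\subseteq$", every other minor must lie in the ideal generated by these; minors mixing different augmentation choices within a single vertex reduce via $\aug{\wtI_v}\aug{\Gcal}\subseteq\aug{\wtI_v}$-type inclusions.

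The main obstacle I expect is the passage to the limit. The finite-level $L$-elements are only compatible modulo the norm ideals of $C$-vertices, and the limit presentation must be checked to be exact: Fitting ideals commute with inverse limits of finitely presented modules over compact rings, but the $C$-summands change shape along the tower. I would first prove a finite-level formula for $\Fitt_{\Zp[\Gcal_n]}\Pic_p(Y_n)$ modulo the norm ideals. I would then pass to the limit using the fact that these ideals have zero inverse limit, together with $\Fitt$ of $\varprojlim$ being the $\varprojlim$ of the $\Fitt$'s.
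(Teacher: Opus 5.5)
Your outline (norm-weighted Laplacian relations plus augmentation relations, maximal minors indexed by $B'\subseteq B$, block-triangular evaluation) is the paper's. However, the step that actually proves ``$\subseteq$'' is missing, and the presentation you describe would not support it.

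In the correct presentation (Theorem~\ref{thm:presentation}), the column attached to $v\in A\cup B$ is $\Norm{\wtI_v}L_{Y_\infty^{\nr}}(w^{\nr}_{v,\infty})$. This is the Laplacian column multiplied by the \emph{norm} of $\wtI_v$, reflecting $L_{Y_n}(w_{v,n})=\Norm{I_{v,n}}P_{v,n}$. It is not multiplied by generators of $\aug{\wtI_v}$: such products with Laplacian columns are not relations in general. A vertex $v\in C$ has \emph{no} Laplacian column at all, only the columns $(g_{v,j}-1)w^{\nr}_{v,\infty}$.

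The inclusion ``$\subseteq$'' then holds because every other maximal minor is zero. If a minor contains both the Laplacian column and an augmentation column at the same $v\in B$, expanding along the augmentation column (supported only in row $v$) produces the factor $\Norm{\wtI_v}(g_{v,j}-1)=0$. Two augmentation columns at one vertex sit in the same single row, so that minor also vanishes. Hence, with $|V_X|$ rows, a nonzero minor uses exactly one column per vertex: Laplacian on $A$, augmentation on $C$, either one on $B$. Inclusions like $\aug{\wtI_v}\aug{\Gcal}\subseteq\aug{\wtI_v}$ play no role and do not dispose of the mixed minors.

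A few smaller points. You reverse $B'$ in one sentence; on $B'$ the augmentation column is chosen. It is the absence of Laplacian columns at $C$, not a ``degree-zero correction'', that removes the $C$-rows and columns. Finally, identifying the upper-left block with $\ThetaStar_{Y_\infty^{B'}/X}(1)$ is just \eqref{eq:theta-Bprime-one}, i.e.\ $u=1$ in the defining determinant. Neither the factorization of $\Theta$ nor the determinant formulas for Ihara $L$-functions are needed.

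The limit step you flag as the main obstacle is handled in the paper at the top of the tower, without finite-level Fitting ideals. Theorem~\ref{thm:Pic-quotient} gives $\Pic_{\LambdaG}(Y_\infty)\simeq\Div_{\LambdaG}(Y_\infty)/(\Princ^A+\Princ^B)$ by proving $\bigcap_n\mathcal N_n=\Princ^A+\Princ^B$. The terms in $\omega_n\Div_{\LambdaG}(Y_\infty)$ and the $C$-relations $\Norm{\wtI_v^{\,0}}\omega_{n,k_v}P_{v,\infty}$ tend to $0$ by compactness, while the finitely generated submodule $\Princ^A+\Princ^B$ is closed. This yields an exact finite presentation over $\LambdaG$ itself, so the ideals $J_n$ and the compatibility of the $L$-elements never enter.

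Your finite-level route can be made to work. The extra level-$n$ minors come from the $\Norm{I_{v,n}}$-weighted Laplacian columns at $v\in C$, so they are divisible by some $\Norm{I_{v,n}}$ and lie in $J_n$. Exactness of $\varprojlim$ on compact modules then gives $\varprojlim_n\bigl(\Fitt(\Pic_p(Y_n))+J_n\bigr)=\varprojlim_n\Fitt(\Pic_p(Y_n))$, because $\varprojlim_n J_n=0$. But you do not supply this interchange. In any case the route rests on the same minor computation, so the vanishing argument above is the essential missing piece.
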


The ideal on the right is the sum, over $B'\subseteq B$, of the indicated
products of special values and augmentation ideals. Vertices with infinite
inertia contribute their augmentation ideals to every summand. For a
vertex with nontrivial finite inertia, its norm factor is retained inside
the determinant when $v\notin B'$, and its augmentation ideal occurs
outside the determinant when $v\in B'$. In this way, the formula records
both kinds of ramification over the integral completed group ring.

The proof starts with a description of the Picard Iwasawa module in terms
of divisors and principal divisors at the top of the tower. We use this
description to construct a free presentation whose relations come from
norm-weighted Laplacians and vertex inertia. A calculation of the maximal
minors of the presentation matrix, together with the determinant formulas
for the modified $L$-polynomials, then gives the theorem.

Two special cases recover earlier results. If $Y_\infty/X$ is
unramified, then $B=C=\varnothing$, and the theorem recovers the Picard-group
Fitting formula of Kataoka~\cite[Proposition~5.10]{Kataoka}.
If $G=1$, then $\Gcal=\Gamma\simeq\Zp$ and $B=\varnothing$. In this case,
each ramified inertia group has the form $\Gamma^{p^{k_v}}$, with
augmentation ideal $(\gamma^{p^{k_v}}-1)$ for a topological generator
$\gamma$ of $\Gamma$. The presentation is square, and the formula gives the
characteristic-ideal main conjecture of~\cite{GV24,GV26}.

The theorem also has the form of an integral equivariant main conjecture
in number theory: an Iwasawa module is related to equivariant $L$-values
together with local ideals arising from ramification. In particular, the
computation of the Fitting ideal of a modified classical Iwasawa module in
Gambheera--Popescu~\cite[Theorem~4.9]{GP25RW} and the refinement by the
author in~\cite[Theorem~1.2]{G25} provide arithmetic counterparts to this
description. Here the Picard Iwasawa module, modified Ihara values, and
vertex inertia supply the corresponding graph-theoretic objects.

The same presentation method also gives a finite-level result. Let
$Y/X$ be a finite abelian branched cover with group $G$, let $I_v\leq G$
be its vertex inertia groups, and put
$V_X^{\ram}=\{v\in V_X:I_v\ne1\}$. For
$B'\subseteq V_X^{\ram}$, let $Y^{B'}$ be the cover obtained by making the
vertices in $B'$ totally ramified. The modified polynomial
$\ThetaStar_{Y^{B'}/X}(u)$ is defined in Section~6 by the determinant with
the rows and columns indexed by $B'$ removed.
Theorem~\ref{thm:finite-Fitt} gives
\[
\Fitt_{\Zp[G]}\!\left(\Pic_p(Y)\right)
=
\left(
\ThetaStar_{Y^{B'}/X}(1)
\prod_{v\in B'}\aug{I_v}
\ ;\ B'\subseteq V_X^{\ram}
\right).
\]
This is a graph-theoretic analogue of Kurihara's formula for the Fitting
ideal of the minus part of the Pontryagin dual of a ray class group,
proved away from $2$ by
Dasgupta--Kakde~\cite[Theorem~3.5]{DK23}. In that formula, the
Sinnott--Kurihara ideal is constructed from equivariant $L$-values and
local factors at ramified primes. Here the generators are likewise
indexed by subsets of the ramified vertices, with modified Ihara values
and vertex augmentation ideals as their factors.

Since a Fitting ideal is contained in the annihilator of the module, we
also obtain
\[
\ThetaStar_{Y^{B'}/X}(1)\prod_{v\in B'}\aug{I_v}
\subseteq
\Ann_{\Zp[G]}\!\left(\Pic_p^0(Y)\right)
\]
for every $B'\subseteq V_X^{\ram}$.
In particular, $\ThetaStar_{Y/X}(1)$ annihilates $\Pic_p^0(Y)$.
These are graph-theoretic analogues of class-group annihilation
statements arising from the Brumer--Stark conjecture.

The paper is organized as follows. Section~2 recalls the graph-theoretic
and algebraic preliminaries. In Section~3, we introduce equivariant
branched towers and describe their Picard Iwasawa modules. Section~4
constructs the equivariant Ihara $L$-element and the modified
$L$-polynomials. In Section~5, we give a free presentation, prove the
equivariant main conjecture, and recover the one-variable main conjecture.
Section~6 contains the finite-level Fitting formula and its annihilation
consequences. Finally, Section 7 gives two explicit examples in which we compute the equivariant Fitting ideals and the Picard groups at finite levels.

\subsection*{Acknowledgments}

The author acknowledges the support of the AMS–Simons Travel Grant and the Morningside Center of Mathematics, Chinese Academy of Sciences. The author would also like to thank Daniel Vallières for several stimulating discussions.

\section{Preliminaries}

Throughout the paper, $p$ denotes a fixed rational prime. We use Serre's formalism for graphs, as in \cite{GV24,GV26}. Thus a graph $X$ consists of a vertex set $V_X$, a set $E_X$ of directed edges, an involution $e\mapsto \bar e$ without fixed points, and origin and terminus maps $o,t:E_X\to V_X$ satisfying $o(\bar e)=t(e)$. If $v\in V_X$, we write
\[
E_{X,v}=\{e\in E_X:o(e)=v\}.
\]
The graph $X$ is called locally finite if $E_{X,v}$ is a finite set for each $v\in V_X$. In that case we define the valency of a vertex $v\in V_X$ as $\val_X(v)=|E_{X,v}|$. An orientation of $X$ is a subset $O_X\subseteq E_X$ containing exactly one edge from each pair $\{e,\bar e\}$. If $X$ is finite, then
\[
\chi(X)=|V_X|-|O_X|=|V_X|-\frac{|E_X|}{2}.
\]

\subsection{Divisors and Picard groups}

For a graph $X$, let $\Div(X)$ be the free abelian group on $V_X$. If $X$ is locally finite, its Laplacian operator
\[
L_X:\Div(X)\longrightarrow \Div(X)
\]
is defined on vertices by
\begin{equation}\label{eq:laplacian}
L_X(v)=\val_X(v)v-\sum_{e\in E_{X,v}}t(e).
\end{equation}
We set
\[
\Princ(X)=\im(L_X),\qquad
\Pic(X)=\Div(X)/\Princ(X).
\]
Let $\Div^0(X)$ be the subgroup of divisors of degree zero and define
\[
\Pic^0(X)=\Div^0(X)/\Princ(X).
\]
When $X$ is finite and connected, $\Pic^0(X)$ is finite and its order is the number of spanning trees of $X$.

We use the $p$-adic versions
\[
\Div_p(X)=\Div(X)\otimes_\Z\Zp,\qquad
\Princ_p(X)=\Princ(X)\otimes_\Z\Zp,
\]
and similarly $\Pic_p(X)$ and $\Pic_p^0(X)$. If a finite abelian group $H$ acts on $X$ without inversion, then all these groups carry natural $\Zp[H]$-module structures.

\subsection{Branched covers from voltage assignments}

Let $H$ be an abelian group, written multiplicatively. A voltage assignment on a graph $X$ with values in $H$ is a function
\[
\alpha:E_X\longrightarrow H
\]
satisfying $\alpha(\bar e)=\alpha(e)^{-1}$. Let
\[
\Ical=\{(v,I_v):v\in V_X,\ I_v\le H\}
\]
be a family of subgroups indexed by the vertices of $X$. The graph
\[
X(H,\Ical,\alpha)
\]
has vertex set
\[
\coprod_{v\in V_X}\{v\}\times H/I_v
\]
and directed edge set $E_X\times H$. The edge $(e,\sigma)$ joins
\[
(o(e),\sigma I_{o(e)})\quad\text{to}\quad (t(e),\sigma\alpha(e)I_{t(e)}).
\]
The group $H$ acts on this graph by left multiplication on the second coordinate. The action is without inversion and free on directed edges. The natural map
\[
X(H,\Ical,\alpha)\longrightarrow X
\]
is a branched cover, and the stabilizer of a vertex above $v$ is $I_v$. We refer to \cite[Sections 3--4]{GV24} for further details.

For a finite subgroup $J$ of an abelian group $H$, we write
\[
\Norm{J}:=\sum_{j\in J}j
\in \Z[H] \]
for its norm element. If $J$ is a closed, topologically finitely generated subgroup of a profinite abelian group $H$, we write
\[
\aug{J}=(j-1:j\in J)\subseteq \Zp[[H]]
\]
for its augmentation ideal. If $g_1,\ldots,g_r$ topologically generate $J$, then
\begin{equation}\label{eq:augmentation-generators}
\aug{J}=(g_1-1,\ldots,g_r-1).
\end{equation}

\subsection{Fitting ideals}

Let $R$ be a commutative ring and let $M$ be a finitely presented $R$-module. Given a presentation
\[
R^a\xrightarrow{A}R^b\longrightarrow M\longrightarrow 0,
\]
the zeroth Fitting ideal $\Fitt_R(M)$ is generated by the $b\times b$ minors of $A$. We shall use repeatedly the standard inclusion
\begin{equation}\label{eq:fitt-ann}
\Fitt_R(M)\subseteq \Ann_R(M).
\end{equation}
We will also use the usual base-change property of Fitting ideals: if
$\varphi:R\to S$ is a morphism of commutative rings and $M$ is a finitely
presented $R$-module, then
\[
\operatorname{Fitt}_{S}\!\left(M\otimes_R S\right)
=
\operatorname{Fitt}_{R}(M)\,S,
\]
where $\operatorname{Fitt}_{R}(M)\,S$ denotes the ideal of $S$ generated by
the image of $\operatorname{Fitt}_{R}(M)$ under $\varphi$.

\section{Equivariant branched $\mathbb Z_p$-towers}

In this section, we introduce equivariant branched \(\mathbb Z_p\)-towers and describe their Picard Iwasawa modules in terms of divisors and principal divisors. This description will be used in Section 5 to construct an explicit presentation and compute the Fitting ideal.

\subsection{The basic setup}

Let $X$ be a finite connected graph and let
\[
Y=X(G,\Ical,\alpha)\longrightarrow X
\]
be a finite abelian branched cover, where $G$ is finite abelian,
\[
\Ical=\{(v,I_v):v\in V_X\},\qquad \alpha:E_X\longrightarrow G.
\]
Let $\Gamma$ be a multiplicative topological group isomorphic to $\Zp$, and suppose that $\Gcal$ is an abelian profinite group fitting into an exact sequence
\begin{equation}\label{eq:extension}
1\longrightarrow \Gamma\longrightarrow \Gcal\xrightarrow{\pi}G\longrightarrow 1.
\end{equation}
We assume that the voltage assignment $\alpha$ lifts to a voltage assignment
\[
\wtalpha:E_X\longrightarrow \Gcal,
\qquad \pi\circ\wtalpha=\alpha,
\]
and that, for every $v\in V_X$, we are given a closed subgroup $\wtI_v\le \Gcal$ satisfying
\[
\pi(\wtI_v)=I_v.
\]
We put
\[
\wtIcal=\{(v,\wtI_v):v\in V_X\}
\]
and define
\[
Y_\infty=X(\Gcal,\wtIcal,\wtalpha).
\]
Thus $Y_\infty/X$ is a branched cover with group $\Gcal$, and $Y/X$ is the quotient corresponding to the closed subgroup $\Gamma$.

Fix a topological generator $\gamma$ of $\Gamma$. For $n\ge 0$, set
\[
\Gamma_n=\Gamma^{p^n},\qquad \Gcal_n=\Gcal/\Gamma_n,
\]
and let $\pi_n:\Gcal\twoheadrightarrow \Gcal_n$ be the natural projection. We put
\[
I_{v,n}=\pi_n(\wtI_v),\qquad
\Ical_n=\{(v,I_{v,n}):v\in V_X\},\qquad
\alpha_n=\pi_n\circ\wtalpha,
\]
and define
\[
Y_n=X(\Gcal_n,\Ical_n,\alpha_n).
\]
Then $Y_0\simeq Y$ and we obtain a tower of branched covers
\[
Y=Y_0\longleftarrow Y_1\longleftarrow Y_2\longleftarrow\cdots\longleftarrow Y_n\longleftarrow\cdots\longleftarrow Y_\infty.
\]
We assume throughout that all finite graphs $Y_n$ are connected.

Let
\[
\LambdaG=\Zp[[\Gcal]]=\varprojlim_n\Zp[\Gcal_n].
\]
We shall distinguish three types of vertices:
\begin{align*}
A&=\{v\in V_X:\wtI_v=1\},\\
B&=\{v\in V_X:1\ne \wtI_v\text{ and }\wtI_v\text{ is finite}\},\\
C&=\{v\in V_X:\wtI_v\text{ is infinite}\}.
\end{align*}
Thus $V_X=A\sqcup B\sqcup C$.

For $v\in C$, the closed subgroup $\wtI_v\cap\Gamma$ is nontrivial and hence open in $\Gamma$. Consequently, there is a unique integer $k_v\ge0$ such that
\begin{equation}\label{eq:kv}
\wtI_v\cap\Gamma=\Gamma^{p^{k_v}}.
\end{equation}
Choose once and for all a set $\wtI_v^{\,0}\subseteq\wtI_v$ mapping bijectively to $I_v$ under $\pi$. Thus
\[
|\wtI_v^{\,0}|=|I_v|,
\qquad
\wtI_v=\wtI_v^{\,0}\Gamma^{p^{k_v}}
\]
as a disjoint union of cosets of $\Gamma^{p^{k_v}}$.

We use the standard elements
\[
\omega_n=\gamma^{p^n}-1\in\LambdaG,
\qquad
\omega_{n,k}=\frac{\gamma^{p^n}-1}{\gamma^{p^k}-1}
=\sum_{j=0}^{p^{n-k}-1}\gamma^{jp^k}
\]
for $n\ge k$.

\subsection{Divisors at the top of the tower}

For each $v\in V_X$, write
\[
w_{v,\infty}=(v,\wtI_v)\in V_{Y_\infty}
\]
for the distinguished vertex above $v$. We define the equivariant divisor module at the top of the tower by
\begin{equation}\label{eq:div-top}
\Div_{\LambdaG}(Y_\infty)
=
\bigoplus_{v\in V_X}\Zp[[\Gcal/\wtI_v]]\,w_{v,\infty}.
\end{equation}
The summands are viewed as $\LambdaG$-modules through the natural quotient maps. This description agrees canonically with the inverse limit of the modules $\Div_p(Y_n)$ under the push-forward maps induced by the covering maps $Y_{n+1}\to Y_n$.

For each $n$, let
\[
w_{v,n}=(v,I_{v,n})\in V_{Y_n}.
\]
The natural projection $Y_\infty\to Y_n$ induces a surjective $\LambdaG$-module morphism
\begin{equation}\label{eq:pi-n-div}
\varpi_n:\Div_{\LambdaG}(Y_\infty)\longrightarrow\Div_p(Y_n),
\qquad w_{v,\infty}\longmapsto w_{v,n}.
\end{equation}

\begin{lemma}\label{lem:kernel-varpi}
Assume that $n\ge k_v$ for every $v\in C$. Then
\[
\kerop(\varpi_n)=\omega_n\Div_{\LambdaG}(Y_\infty).
\]
\end{lemma}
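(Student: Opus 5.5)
Since $\Div_{\LambdaG}(Y_\infty)$ is a direct sum over $v\in V_X$ of the cyclic modules $\Zp[[\Gcal/\wtI_v]]\,w_{v,\infty}$, and $\varpi_n$ respects this decomposition, the plan is to argue one summand at a time. Note first that $\Div_p(Y_n)=\bigoplus_{v}\Zp[\Gcal_n/I_{v,n}]\,w_{v,n}$, because the vertices of $Y_n$ above $v$ are the cosets $\Gcal_n/I_{v,n}$. Next, $\Gcal_n/I_{v,n}=\Gcal/(\wtI_v\Gamma_n)$. Hence, for a fixed $v$, the restriction of $\varpi_n$ is the natural surjection
\[
\Zp[[\Gcal/\wtI_v]]\longrightarrow \Zp[\Gcal/\wtI_v\Gamma_n].
\]
It therefore suffices to show that the kernel of this map equals $\omega_n\Zp[[\Gcal/\wtI_v]]$.

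The general fact to invoke is the following. For a profinite abelian group $H$ and a closed subgroup $K$ topologically generated by one element $\kappa$, the kernel of $\Zp[[H]]\to\Zp[[H/K]]$ is the augmentation ideal of $K$ times $\Zp[[H]]$, which is $(\kappa-1)\Zp[[H]]$ by \eqref{eq:augmentation-generators}. Apply this with $H=\Gcal/\wtI_v$ and $K$ the image of $\Gamma_n$ in $H$. This image is topologically generated by the class of $\gamma^{p^n}$, so the kernel is $(\gamma^{p^n}-1)\Zp[[H]]=\omega_n\Zp[[H]]$, as required. To justify the general fact, write $\Zp[[H]]=\varprojlim_U\Zp[H/U]$ over open subgroups $U$. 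At each finite level the kernel of $\Zp[H/U]\to\Zp[H/KU]$ is generated by $\kappa-1$; this is the standard group-ring statement, proved by choosing coset representatives. Passing to the limit uses compactness: the ideals $(\kappa-1)\Zp[[H]]$ are closed, being images of a compact module, and the system of kernels is Mittag-Leffler.

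The hypothesis $n\ge k_v$ for $v\in C$ is where care is needed, though it seems only to enter in making the statement clean. For $v\in A\cup B$, the group $\wtI_v$ is finite and meets $\Gamma$ trivially. For $v\in C$, we have $\wtI_v\cap\Gamma=\Gamma^{p^{k_v}}\supseteq\Gamma_n$ whenever $n\ge k_v$. In that case the image of $\Gamma_n$ in $\Gcal/\wtI_v$ is trivial, so $\varpi_n$ is an isomorphism on that summand. Consistently, $\omega_n$ acts as zero on $\Zp[[\Gcal/\wtI_v]]$, since $\gamma^{p^n}\in\wtI_v$. The argument above covers this case uniformly as well.

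The only genuine obstacle is the limit step in the general fact. I expect to avoid it by a direct reduction. Since $\Gcal/\wtI_v$ contains the image of $\Gamma$ with finite index, its completed group ring is a finite free module over $\Zp[[\bar\Gamma]]$, where $\bar\Gamma$ is the image of $\Gamma$. Choose coset representatives to obtain this basis. The claim then reduces to the classical identity $\Zp[[\bar\Gamma]]/(\gamma^{p^n}-1)=\Zp[\bar\Gamma/\bar\Gamma^{p^n}]$. Here $\bar\Gamma$ is either $\Zp$, in which case this is the standard Iwasawa fact, or a finite cyclic $p$-group, in which case it is elementary.
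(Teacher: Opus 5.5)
Your proposal is correct. It uses the same vertex-by-vertex decomposition as the paper, but it justifies the kernel computation differently.

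\textbf{The paper's route.} For $v\in A\cup B$, the paper writes the finite exact sequences $1\to\Gamma_n/\Gamma_m\to\Gcal_m/I_{v,m}\to\Gcal_n/I_{v,n}\to 1$ for $m\ge n$. It then passes to the inverse limit of group rings. This tacitly uses the finite-level kernel computation together with the compactness (Mittag-Leffler) step that you flagged. For $v\in C$, the paper argues separately: it uses $n\ge k_v$ to get $\Gamma_n\subseteq\wtI_v$, so the component map is an isomorphism and $\omega_n$ acts as zero.

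\textbf{Your route.} You treat every vertex uniformly as the projection $\Zp[[H]]\to\Zp[H/\bar\Gamma^{p^n}]$, where $H=\Gcal/\wtI_v$ and $\bar\Gamma$ is the finite-index image of $\Gamma$. You then note that $\Zp[[H]]$ is free over $\Zp[[\bar\Gamma]]$ on coset representatives of $\bar\Gamma$, and the target is free over $\Zp[\bar\Gamma/\bar\Gamma^{p^n}]$ on the same representatives. This reduces everything to $\Zp[[\bar\Gamma]]/(\bar\gamma^{p^n}-1)\simeq\Zp[\bar\Gamma/\bar\Gamma^{p^n}]$, with no inverse-limit argument at all.

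\textbf{What each buys.} Your argument gives slightly more. The finite cyclic case of that identity holds for every $n$, so your argument proves the lemma without the hypothesis $n\ge k_v$. That hypothesis is really needed only in Proposition~\ref{prop:finite-codescent}, where $\mathcal N_{v,n}$ must be defined and satisfy $\pi_n(\mathcal N_{v,n})=\Norm{I_{v,n}}$. The paper's version is shorter and makes the $C$-case transparent, at the cost of leaving the exactness of the limit implicit.
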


\begin{proof}
It is enough to work one vertex orbit at a time. If $v\in A\cup B$, then $\wtI_v\cap\Gamma=1$. Hence for every $m\ge n$, the natural projection gives an exact sequence
\[
1\longrightarrow \Gamma_n/\Gamma_m
\longrightarrow \Gcal_m/I_{v,m}
\longrightarrow \Gcal_n/I_{v,n}
\longrightarrow 1.
\]
Passing to completed group rings gives an exact sequence
\[
0\longrightarrow \omega_n\Zp[[\Gcal/\wtI_v]]
\longrightarrow \Zp[[\Gcal/\wtI_v]]
\longrightarrow \Zp[\Gcal_n/I_{v,n}]
\longrightarrow 0.
\]

Now let $v\in C$. If $n\ge k_v$, then $\Gamma_n\subseteq\wtI_v$ by \eqref{eq:kv}, and therefore
\[
\Gcal/\wtI_v\xrightarrow{\sim}\Gcal_n/I_{v,n}.
\]
Thus the map on the $v$-component is an isomorphism. At the same time $\omega_n$ acts trivially on $\Zp[[\Gcal/\wtI_v]]$. Combining the components gives the result.
\end{proof}

\subsection{Principal divisors}

Consider the unramified voltage cover
\[
Y_\infty^{\nr}=X(\Gcal,\{(v,1):v\in V_X\},\wtalpha).
\]
Its divisor module
\[
\Div_{\LambdaG}(Y_\infty^{\nr})
=
\bigoplus_{v\in V_X}\LambdaG\,w_{v,\infty}^{\nr}
\]
is free of rank $|V_X|$, where $w_{v,\infty}^{\nr}=(v,1)$. The natural morphism
\[
\iota:Y_\infty^{\nr}\longrightarrow Y_\infty
\]
induces a surjection on divisor modules. For $v\in V_X$, define
\begin{equation}\label{eq:P-v-infty}
P_{v,\infty}
:=\iota_*L_{Y_\infty^{\nr}}(w_{v,\infty}^{\nr})
=\val_X(v)w_{v,\infty}
-\sum_{e\in E_{X,v}}
\bigl(t(e),\wtalpha(e)\wtI_{t(e)}\bigr).
\end{equation}
For later use, we also introduce the corresponding divisors at the finite
levels. For each $n\geq 0$, let
\[
Y_n^{\mathrm{unr}}
=
X\bigl(\mathcal G_n,
\{(v,\{1\}) : v\in V_X\},
\alpha_n\bigr)
\]
be the unramified cover obtained by replacing all the inertia groups by the
trivial group, and let
\[
\iota_n:Y_n^{\mathrm{unr}}\longrightarrow Y_n
\]
be the natural morphism of graphs. If $w_{v,n}^{\mathrm{unr}}$ denotes the
distinguished vertex of $Y_n^{\mathrm{unr}}$ lying above $v$, we define
\[
P_{v,n}
:=
(\iota_n)_*
\bigl(
L_{Y_n^{\mathrm{unr}}}(w_{v,n}^{\mathrm{unr}})
\bigr)
\in \operatorname{Div}_p(Y_n).
\]
Explicitly,
\[
P_{v,n}
=
\operatorname{val}_X(v)\,w_{v,n}
-
\sum_{e\in E_{X,v}}
\bigl(t(e),\alpha_n(e)I_{t(e),n}\bigr),
\]
where $w_{v,n}=(v,I_{v,n})$ denotes the distinguished vertex of $Y_n$
lying above $v$. In particular, if
\[
\varpi_n:
\operatorname{Div}_{\Lambda(\mathcal G)}(Y_\infty)
\longrightarrow
\operatorname{Div}_p(Y_n)
\]
denotes the natural projection map, then
\[
\varpi_n(P_{v,\infty})=P_{v,n}.
\]
We introduce the $\LambdaG$-submodules of $\Div_{\LambdaG}(Y_\infty)$
\begin{align}
\Princ^A&=\big\langle P_{v,\infty}:v\in A\big\rangle,\label{eq:PrA}\\
\Princ^B&=\big\langle \Norm{\wtI_v}P_{v,\infty}:v\in B\big\rangle. \label{eq:PrB}
\end{align}
For $v\in C$ and $n\ge k_v$, set
\[
\mathcal N_{v,n}:=\Norm{\wtI_v^{\,0}}\,\omega_{n,k_v}\in\LambdaG,
\]
where \[
\Norm{\wtI_v^{\,0}}=\sum_{i\in \wtI_v^{\,0}} i \in \LambdaG
\]
and define
\begin{equation}\label{eq:PrC}
\Princ^{C,n}
=\big\langle \mathcal N_{v,n}P_{v,\infty}:v\in C\big\rangle.
\end{equation}
Finally, let
\begin{equation}\label{eq:Nn}
\mathcal N_n
=\omega_n\Div_{\LambdaG}(Y_\infty)+\Princ^A+\Princ^B+\Princ^{C,n}.
\end{equation}

\begin{proposition}\label{prop:finite-codescent}
Assume that $n\ge k_v$ for all $v\in C$. Then $\varpi_n$ induces a natural isomorphism
\[
\Div_{\LambdaG}(Y_\infty)/\mathcal N_n
\xrightarrow{\sim}
\Pic_p(Y_n).
\]
\end{proposition}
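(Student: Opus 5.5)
The plan is to compose $\varpi_n$ with the quotient map $\Div_p(Y_n)\to\Pic_p(Y_n)$. Since $\varpi_n$ is surjective, the composite is surjective. By Lemma~\ref{lem:kernel-varpi}, $\kerop(\varpi_n)=\omega_n\Div_{\LambdaG}(Y_\infty)$. It therefore suffices to prove
\[
\varpi_n\bigl(\Princ^A+\Princ^B+\Princ^{C,n}\bigr)=\Princ_p(Y_n),
\]
since then the kernel of the composite is $\varpi_n^{-1}(\Princ_p(Y_n))=\mathcal N_n$.

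The key step is to express $\Princ_p(Y_n)$ through the unramified cover. The map $\iota_n:Y_n^{\nr}\to Y_n$ is a $\Gcal_n$-equivariant morphism which is a bijection on directed edges (both have edge set $E_X\times\Gcal_n$). For a vertex $w=(v,\sigma I_{v,n})$ of $Y_n$, the outgoing edges of $w$ are the disjoint union over the fibre $\iota_n^{-1}(w)=\{(v,\sigma i):i\in I_{v,n}\}$ of the outgoing edges of those vertices. Hence
\[
L_{Y_n}(w)=\sum_{x\in\iota_n^{-1}(w)}(\iota_n)_*L_{Y_n^{\nr}}(x),
\qquad\text{so}\qquad
L_{Y_n}(w_{v,n})=\Norm{I_{v,n}}P_{v,n}.
\]
Indeed, the valencies add up and the terminal vertices are pushed forward. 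I will verify this identity directly from the explicit formula for $P_{v,n}$; it is the only place the graph structure enters. Since the $w_{v,n}$ generate $\Div_p(Y_n)$ as a $\Zp[\Gcal_n]$-module and $L_{Y_n}$ is equivariant, $\Princ_p(Y_n)$ is the $\Zp[\Gcal_n]$-module generated by the elements $\Norm{I_{v,n}}P_{v,n}$ for $v\in V_X$.

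It remains to match these generators with the images of the generators of $\mathcal N_n$, using $\varpi_n(P_{v,\infty})=P_{v,n}$.
\begin{itemize}
\item For $v\in A$, we have $I_{v,n}=1$, so the generator is $P_{v,n}$.
\item For $v\in B$, we have $\wtI_v\cap\Gamma=1$, so $\pi_n$ restricts to an isomorphism $\wtI_v\to I_{v,n}$. Thus $\pi_n(\Norm{\wtI_v})=\Norm{I_{v,n}}$.
\item For $v\in C$ with $n\ge k_v$, the group $I_{v,n}$ is the image of $\wtI_v=\wtI_v^{\,0}\Gamma^{p^{k_v}}$, a disjoint union of cosets. Its image is the disjoint union of the sets $i\,\Gamma^{p^{k_v}}/\Gamma^{p^n}$ for $i\in\wtI_v^{\,0}$. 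This gives $\Norm{I_{v,n}}=\pi_n(\Norm{\wtI_v^{\,0}})\,\pi_n(\omega_{n,k_v})=\pi_n(\mathcal N_{v,n})$.
\end{itemize}
The $v\in C$ case needs some care. One must check that the images of distinct elements of $\wtI_v^{\,0}$ lie in distinct cosets of $\Gamma^{p^{k_v}}/\Gamma_n$; this holds because they have distinct images in $I_v$. One must also check that $\omega_{n,k_v}$ maps to the norm of $\Gamma^{p^{k_v}}/\Gamma^{p^n}$. With these checks, the images of the generators of $\Princ^A+\Princ^B+\Princ^{C,n}$ are exactly the generators $\Norm{I_{v,n}}P_{v,n}$. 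Since $\LambdaG\to\Zp[\Gcal_n]$ is surjective, the submodules they generate agree, and this gives the desired equality.

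The only nontrivial ingredient is the Laplacian identity $L_{Y_n}(w_{v,n})=\Norm{I_{v,n}}P_{v,n}$; the rest is bookkeeping. Naturality of the resulting isomorphism is clear, since the map is induced by $\varpi_n$.
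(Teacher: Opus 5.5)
Your proposal is correct and follows the paper's proof. Like the paper, you use Lemma~\ref{lem:kernel-varpi} to reduce to showing $\varpi_n(\Princ^A+\Princ^B+\Princ^{C,n})=\Princ_p(Y_n)$, then match generators vertex type by vertex type via $\pi_n(\Norm{\wtI_v})=\Norm{I_{v,n}}$ for $v\in B$ and $\pi_n(\mathcal N_{v,n})=\Norm{I_{v,n}}$ for $v\in C$. The only difference is that the paper cites \cite[Corollary~4.7]{GV24} for the fact that the elements $\Norm{I_{v,n}}P_{v,n}$ generate $\Princ_p(Y_n)$, whereas you derive it from the fibrewise identity $L_{Y_n}(w_{v,n})=\Norm{I_{v,n}}P_{v,n}$, which is a valid self-contained verification.
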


\begin{proof}
By Lemma \ref{lem:kernel-varpi}, it is enough to identify the image of the remaining terms in \eqref{eq:Nn}. For $v\in A$, the image of $P_{v,\infty}$ is $P_{v,n}$. For $v\in B$, the map $\wtI_v\to I_{v,n}$ is an isomorphism and hence
\[
\varpi_n(\Norm{\wtI_v}P_{v,\infty})
=\Norm{I_{v,n}}P_{v,n}.
\]
If $v\in C$, then the coset decomposition of $\wtI_v$ gives
\[
\pi_n(\mathcal N_{v,n})=\Norm{I_{v,n}},
\]
so that
\[
\varpi_n(\mathcal N_{v,n}P_{v,\infty})
=\Norm{I_{v,n}}P_{v,n}.
\]
By the principal-divisor description for branched voltage covers in \cite[Corollary 4.7]{GV24}, the elements $\Norm{I_{v,n}}P_{v,n}$, as $v$ ranges through $V_X$, generate $\Princ_p(Y_n)$ over $\Zp[\Gcal_n]$. Therefore
\[
\varpi_n(\mathcal N_n)=\Princ_p(Y_n),
\]
and the result follows.
\end{proof}

\begin{definition}\label{def:Pic-Iwasawa}
The equivariant Picard Iwasawa module attached to $Y_\infty/X$ is
\[
\Pic_{\LambdaG}(Y_\infty)
:=\varprojlim_n\Pic_p(Y_n),
\]
where the transition maps are induced by the covering maps $Y_{n+1}\to Y_n$.
\end{definition}

We now pass from the finite-level description in Proposition~\ref{prop:finite-codescent} to the inverse limit. The following theorem identifies the equivariant Picard Iwasawa module with a quotient of the divisor module at the top of the tower.

\begin{theorem}\label{thm:Pic-quotient}
There is a natural isomorphism of $\LambdaG$-modules
\begin{equation}\label{eq:Pic-quotient}
\Pic_{\LambdaG}(Y_\infty)
\xrightarrow{\sim}
\frac{\Div_{\LambdaG}(Y_\infty)}{\Princ^A+\Princ^B}.
\end{equation}
\end{theorem}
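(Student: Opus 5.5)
The plan is to pass to the limit in Proposition~\ref{prop:finite-codescent}. Write $D=\Div_{\LambdaG}(Y_\infty)$. For $n\ge n_0:=\max_{v\in C}k_v$ (with $n_0=0$ if $C=\varnothing$) we have $\Pic_p(Y_n)\simeq D/\mathcal N_n$. The covering maps $Y_{n+1}\to Y_n$ are compatible with the projections $\varpi_{n+1},\varpi_n$. Hence, under these isomorphisms, the transition maps should be the maps $D/\mathcal N_{n+1}\to D/\mathcal N_n$ induced by the identity of $D$, once we know $\mathcal N_{n+1}\subseteq\mathcal N_n$. The inverse limit can be computed over $n\ge n_0$, so
\[
\Pic_{\LambdaG}(Y_\infty)\simeq\varprojlim_{n\ge n_0}D/\mathcal N_n .
\]
The theorem then reduces to two claims: the $\mathcal N_n$ are decreasing, and $\bigcap_n\mathcal N_n=\Princ^A+\Princ^B$. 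Given these, the limit equals $D/\bigcap_n\mathcal N_n$. The reason is that $D$ is a finitely generated, hence compact, $\LambdaG$-module and each $\mathcal N_n$ is finitely generated, hence closed. Inverse limits of compact modules are exact, so $\varprojlim D/\mathcal N_n = D/\bigcap_n \mathcal N_n$.

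For monotonicity, note that $\omega_{n+1}\in\omega_n\LambdaG$, and also
\[
\omega_{n+1,k}=\omega_{n,k}\,\nu_n,\qquad \nu_n=\sum_{j=0}^{p-1}\gamma^{jp^n}\equiv p \pmod{\omega_n}.
\]
Hence $\mathcal N_{v,n+1}P_{v,\infty}\equiv p\,\mathcal N_{v,n}P_{v,\infty}\pmod{\omega_nD}$, which gives $\mathcal N_{n+1}\subseteq\mathcal N_n$. Iterating this congruence shows that for $n\ge m\ge n_0$,
\[
\mathcal N_{v,n}P_{v,\infty}\in p^{\,n-m}D+\omega_mD .
\]
Consequently
\[
\Princ^A+\Princ^B\subseteq\mathcal N_n\subseteq \Princ^A+\Princ^B+\omega_mD+p^{\,n-m}D .
\]
The first inclusion is clear from the definition of $\mathcal N_n$.

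The main point is then the intersection. The submodules $\omega_mD+p^{j}D$ form a basis of open neighbourhoods of $0$ in the compact module $D$. This holds because $\LambdaG$ is a finite $\Zp[[\Gamma]]$-algebra and the ideals $(\omega_m,p^j)$ are cofinal among open ideals of $\Zp[[\Gamma]]$. Given $m$ and $j$, take $n=m+j$ to get
\[
\bigcap_n\mathcal N_n\subseteq \bigcap_{m,j}\bigl(\Princ^A+\Princ^B+\omega_mD+p^jD\bigr)=\overline{\Princ^A+\Princ^B}.
\]
The closure equals $\Princ^A+\Princ^B$ itself, since a finitely generated submodule of a compact module is compact. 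Together with the reverse inclusion this proves $\bigcap_n\mathcal N_n=\Princ^A+\Princ^B$.

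The step I expect to need the most care is the compatibility of the transition maps. Concretely, one must check that the push-forward $\Div_p(Y_{n+1})\to\Div_p(Y_n)$ sends $w_{v,n+1}$ to $w_{v,n}$ and principal divisors to principal divisors. Then the square formed with $\varpi_{n+1},\varpi_n$ commutes, and the isomorphisms of Proposition~\ref{prop:finite-codescent} assemble into an isomorphism of inverse systems. Naturality of the resulting map is then immediate.
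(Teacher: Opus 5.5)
Your proposal is correct and follows the paper's route. You identify $\Pic_{\LambdaG}(Y_\infty)$ with $\varprojlim_n \Div_{\LambdaG}(Y_\infty)/\mathcal N_n$ via Proposition~\ref{prop:finite-codescent}, then show $\bigcap_n\mathcal N_n=\Princ^A+\Princ^B$ using that this finitely generated submodule is closed. The differences are minor: you prove the intersection through the explicit sandwich $\mathcal N_n\subseteq\Princ^A+\Princ^B+\omega_m D+p^{\,n-m}D$ instead of the paper's convergence argument $D_n'\to 0$, and you justify surjectivity onto the limit, which the paper simply asserts, by exactness of inverse limits of compact modules.
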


\begin{proof}
For $n$ sufficiently large, Proposition~\ref{prop:finite-codescent}
gives an isomorphism
\[
\frac{\Div_{\LambdaG}(Y_\infty)}{\mathcal N_n}
\xrightarrow{\sim}
\Pic_p(Y_n).
\]
These isomorphisms are compatible with the transition maps. More precisely,
the natural projection
\[
\frac{\Div_{\LambdaG}(Y_\infty)}{\mathcal N_{n+1}}
\longrightarrow
\frac{\Div_{\LambdaG}(Y_\infty)}{\mathcal N_n}
\]
corresponds, under the above isomorphisms, to the transition map
\[
\Pic_p(Y_{n+1})\longrightarrow\Pic_p(Y_n).
\]
Therefore,
\[
\Pic_{\LambdaG}(Y_\infty)
\simeq
\varprojlim_n
\frac{\Div_{\LambdaG}(Y_\infty)}{\mathcal N_n}.
\]
Since
\[
\Princ^A+\Princ^B\subseteq \mathcal N_n
\]
for every sufficiently large $n$, we obtain natural maps
\[
\frac{\Div_{\LambdaG}(Y_\infty)}
     {\Princ^A+\Princ^B}
\longrightarrow
\frac{\Div_{\LambdaG}(Y_\infty)}{\mathcal N_n},
\]
which induce a surjective $\LambdaG$-module morphism
\begin{equation}\label{eq:natural-map-Pic}
\frac{\Div_{\LambdaG}(Y_\infty)}
     {\Princ^A+\Princ^B}
\longrightarrow
\varprojlim_n
\frac{\Div_{\LambdaG}(Y_\infty)}{\mathcal N_n}.
\end{equation}
It remains to show that this morphism is injective. We first claim that
\begin{equation}\label{eq:intersection-Nn}
\bigcap_{n\gg 0}\mathcal N_n
=
\Princ^A+\Princ^B.
\end{equation}
The inclusion from right to left is clear from the definition of
$\mathcal N_n$. Conversely, let
\[
D\in\bigcap_{n\gg0}\mathcal N_n.
\]
For every sufficiently large $n$, we may write
\[
D=D_n'+P_n,
\]
where
\[
D_n'
\in
\omega_n\Div_{\LambdaG}(Y_\infty)+\Princ^{C,n}
\]
and
\[
P_n\in\Princ^A+\Princ^B.
\]
By definition,
\[
\Princ^{C,n}
=
\sum_{v\in C}
\LambdaG\,
\Norm{\wtI_v^{\,0}}\omega_{n,k_v}P_{v,\infty}.
\]
Since $\omega_n\to0$ and $\omega_{n,k_v}\to0$ in $\LambdaG$ as
$n\to\infty$, and since $\Div_{\LambdaG}(Y_\infty)$ and $\LambdaG$ are
compact, we have
\[
D_n'\longrightarrow 0.
\]
Consequently,
\[
P_n\longrightarrow D.
\]
On the other hand, $\Princ^A+\Princ^B$ is finitely generated over
$\LambdaG$, and hence is compact. In particular, it is closed in
$\Div_{\LambdaG}(Y_\infty)$. It follows that
\[
D\in\Princ^A+\Princ^B,
\]
which proves \eqref{eq:intersection-Nn}.

The equality \eqref{eq:intersection-Nn} shows that the morphism
\eqref{eq:natural-map-Pic} is injective. This completes the proof.
\end{proof}

\section{Equivariant Artin--Ihara $L$-elements}

In this section, we construct the equivariant Ihara \(L\)-element associated with \(Y_\infty/X\), starting from the determinant formula for finite abelian branched covers. We then introduce the modified \(L\)-polynomials whose values at \(u=1\) occur in the equivariant main conjecture.

\subsection{Finite abelian branched covers}

Let
\[
Z=X(H,\mathcal J,\beta)\longrightarrow X
\]
be a finite abelian branched cover of finite connected graphs for the ramification data and the voltage assignment
\[
\mathcal J=\{(v,J_v):v\in V_X\},
\qquad
\beta:E_X\longrightarrow H.
\]
Thus $H$ acts on $Z$ without inversion and freely on directed edges,
and the stabilizer of a vertex above $v$ is $J_v$. Set
\[
\ell=|V_X|,
\qquad
m=|O_X|=\frac{|E_X|}{2},
\]
so that $\chi(X)=\ell-m$.

We use the Ihara $L$-functions and the equivariant formalism developed
in \cite[Sections~4--5]{GV26}. We first recall the character notation and the determinant description
needed below.\\

Let $\widehat H=\operatorname{Hom}(H,\C^\times)$. For
$\psi\in\widehat H$, we also write $\psi$ for the induced
$\C$-algebra morphism
\[
\psi:\C[H]\longrightarrow\C
\]
and for its coefficientwise extensions to polynomials, power series,
and matrices. Let
\[
e_\psi
=
\frac{1}{|H|}
\sum_{\sigma\in H}\psi(\sigma)\sigma^{-1}
\]
be the associated primitive idempotent. In particular,
\[
\C[H]
=
\bigoplus_{\psi\in\widehat H}\C e_\psi,
\qquad
\psi(e_\varphi)
=
\begin{cases}
1,&\psi=\varphi,\\
0,&\psi\ne\varphi.
\end{cases}
\]
Consider the $\C[H]$-module
\[
M_Z=\Div(Z)\otimes_{\Z}\C.
\]
Its adjacency and valency operators are given by
\[
\mathsf{A}_Z(w)
=
\sum_{\epsilon\in E_{Z,w}}t(\epsilon),
\qquad
\mathsf{D}_Z(w)
=
\val_Z(w)\,w.
\]
Both operators are $H$-equivariant. They therefore preserve
\[
M_{Z,\psi}=e_\psi M_Z.
\]
Write $\mathsf{A}_{Z,\psi}$ and $\mathsf{D}_{Z,\psi}$ for their
restrictions to this space, and let $I_\psi$ denote its identity
operator. Following the notation of \cite[Section~5.3]{GV26}, put
\[
h(u,\psi)
=
det_{\C[u]}
\left(
I_\psi-\mathsf{A}_{Z,\psi}u
+(\mathsf{D}_{Z,\psi}-I_\psi)u^2
\right).
\]
Here the determinant is taken on
$M_{Z,\psi}\otimes_{\C}\C[u]$, and a determinant on the zero module
is understood to be $1$.

For $\psi\in\widehat H$, define
\[
r_0(\psi)
=
\bigl|\{v\in V_X:J_v\not\subseteq\ker(\psi)\}\bigr|.
\]
The permutation-module decomposition
\[
M_Z
\simeq
\bigoplus_{v\in V_X}\C[H/J_v]
\]
gives
\[
\dim_{\C}M_{Z,\psi}
=
\ell-r_0(\psi).
\]
Since the action on directed edges is free and without inversion,
the corresponding character component of the oriented-edge module
has dimension $m$. Thus, with the conventions of \cite{GV26},
the Ihara $L$-function associated with $\psi$ satisfies
\[
L(u,\psi,Z/X)^{-1}
=
(1-u^2)^{m-\ell+r_0(\psi)}h(u,\psi).
\]
This is the characterwise determinant description established in
\cite[Section~5.3]{GV26}. All expressions are interpreted as formal
power series at $u=0$.

We define the equivariant $L$-element by
\[
\Theta_{Z/X}(u)
=
\sum_{\psi\in\widehat H}
L(u,\psi,Z/X)^{-1}e_\psi
\in 1+u\C[H][[u]].
\]
Equivalently, it is characterized by
\begin{equation}\label{eq:theta-interpolation}
\psi\bigl(\Theta_{Z/X}(u)\bigr)
=
L(u,\psi,Z/X)^{-1}.
\end{equation}
In the notation of \cite[Section~4.4]{GV26}, this means that
\[
\Theta_{Z/X}(u)=\theta_Z(u)^{-1},
\]
where $\theta_Z(u)$ denotes the equivariant Ihara zeta function.

We now introduce the matrices that express this element integrally.
Fix an ordering
\[
V_X=\{v_1,\ldots,v_\ell\},
\]
and let
\[
D
=
\operatorname{diag}
\bigl(\val_X(v_1),\ldots,\val_X(v_\ell)\bigr).
\]
Recall that
\[
\Norm{J_v}
=
\sum_{\sigma\in J_v}\sigma,
\]
and set
\[
C_{\mathcal J}
=
\operatorname{diag}
\bigl(\Norm{J_{v_1}},\ldots,\Norm{J_{v_\ell}}\bigr).
\]
Define the voltage adjacency matrix
\[
A_\beta=(a_{ij}(\beta))\in M_\ell(\Z[H])
\]
by
\begin{equation}\label{eq:voltage-adjacency}
a_{ij}(\beta)
=
\sum_{\substack{e\in E_X\\
o(e)=v_i,\ t(e)=v_j}}
\beta(e).
\end{equation}

With this convention, the row index records the origin of an edge
and the column index records its terminus. Consequently, $A_\beta$
is the transpose of the voltage adjacency matrix used in
\cite[Definition~6.4]{GV26}. This correspondence will allow us to
apply the formula on page~43 of that paper directly.

Put
\[
\xi_Z(u)
=
\det\left(
I_\ell-C_{\mathcal J}A_\beta u
+(C_{\mathcal J}D-I_\ell)u^2
\right)
\in 1+u\Z[H][u].
\]
Taking the transpose inside the determinant gives
\[
\xi_Z(u)
=
\det\left(
I_\ell-A_\beta^{\mathsf t}C_{\mathcal J}u
+(DC_{\mathcal J}-I_\ell)u^2
\right).
\]
Thus $\xi_Z(u)$ is precisely the polynomial denoted by $\xi_Y(u)$
in \cite[Definition~6.4]{GV26}, with $Y=Z$.

\begin{proposition}\label{prop:finite-theta-det}
With the notation above, one has
\begin{equation}\label{eq:finite-theta-det}
\Theta_{Z/X}(u)
=
(1-u^2)^{m-\ell}
\det\left(
I_\ell-C_{\mathcal J}A_\beta u
+(C_{\mathcal J}D-I_\ell)u^2
\right).
\end{equation}
In particular,
\[
\Theta_{Z/X}(u)\in 1+u\Z[H][[u]],
\]
and its coefficients have a canonical image in $\Zp[H]$ for every
prime $p$.
\end{proposition}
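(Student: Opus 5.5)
The plan is to verify \eqref{eq:finite-theta-det} one character at a time and then use integrality. Since $\C[H]=\bigoplus_\psi\C e_\psi$, an element of $\C[H][[u]]$ is determined by its images under all $\psi\in\widehat H$. By \eqref{eq:theta-interpolation}, it therefore suffices to show, for every $\psi$,
\[
(1-u^2)^{m-\ell+r_0(\psi)}h(u,\psi)
=
(1-u^2)^{m-\ell}\,
\psi\Bigl(\det\bigl(I_\ell-C_{\mathcal J}A_\beta u+(C_{\mathcal J}D-I_\ell)u^2\bigr)\Bigr).
\]
Equivalently, we need $\psi(\xi_Z(u))=(1-u^2)^{r_0(\psi)}h(u,\psi)$. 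This characterwise identity is exactly the content of the formula on page~43 of \cite{GV26} for $\xi_Y(u)$. We established above that our $\xi_Z(u)$ coincides with that $\xi_Y(u)$ after transposing, so the identity can be quoted. For completeness, I would also sketch a direct verification.

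For fixed $\psi$, split $V_X=V'\sqcup V''$, where $V''=\{v:J_v\not\subseteq\ker\psi\}$, so that $|V''|=r_0(\psi)$. For $v\in V''$ one has $\psi(\Norm{J_v})=0$. Hence the rows of $\psi(C_{\mathcal J}A_\beta u-(C_{\mathcal J}D-I_\ell)u^2)$ indexed by $V''$ reduce to the rows of $(1-u^2)I$, and expanding along these rows gives
\[
\psi(\xi_Z(u))=(1-u^2)^{r_0(\psi)}\det\nolimits_{V'}(\cdots).
\]
For $v\in V'$, $\psi(\Norm{J_v})=|J_v|$. The vectors $e_\psi w_v$ with $v\in V'$ form a basis of $M_{Z,\psi}$, where $w_v$ is the distinguished vertex above $v$. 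I would compute $\mathsf A_Z$ and $\mathsf D_Z$ in this basis. The valency of $w_v$ is $|J_v|\val_X(v)$, and the edges out of $w_v$ are $(e,\sigma)$ with $\sigma\in J_v$; these map to $t(e)$-vertices twisted by $\sigma\beta(e)$. Consequently $\mathsf A_{Z,\psi}$ has matrix given by the $V'\times V'$ block of $\psi(C_{\mathcal J}A_\beta)$, possibly up to transpose or $\psi$ versus $\bar\psi$, and $\mathsf D_{Z,\psi}=\psi(C_{\mathcal J}D)$ on that block. Edges landing in $V''$ vertices are killed by $e_\psi$, which is consistent with those columns being irrelevant after the row expansion above. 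This yields $\det_{V'}(\cdots)=h(u,\psi)$.

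The main obstacle is keeping the conventions straight in this last step: which of $\psi$ or $\bar\psi$ appears, and whether one gets the matrix or its transpose. Both changes leave the determinant unchanged, since the permutation $\psi\mapsto\bar\psi$ of $\widehat H$ is compatible with the decomposition and $\sigma\mapsto\sigma^{-1}$ is an automorphism. Still, this is where the citation to \cite{GV26} is the safest route.

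Finally, for integrality, $\xi_Z(u)\in 1+u\Z[H][u]$ because all matrix entries lie in $\Z[H][u]$ and the constant term is $\det I_\ell=1$. The factor $(1-u^2)^{m-\ell}$ lies in $1+u^2\Z[[u]]$ even when $m<\ell$, since $(1-u^2)^{-1}=\sum_k u^{2k}$. Hence $\Theta_{Z/X}(u)\in1+u\Z[H][[u]]$, and its coefficients map to $\Zp[H]$ via $\Z\to\Zp$.
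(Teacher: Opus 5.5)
Your proof is correct and follows the paper's route. You reduce to a characterwise identity via \eqref{eq:theta-interpolation} and the separation of elements of $\C[H]$ by characters, and you quote \cite{GV26}; your identity $\psi(\xi_Z(u))=(1-u^2)^{r_0(\psi)}h(u,\psi)$ is equivalent to the paper's quoted $L(u,\psi,Z/X)^{-1}=(1-u^2)^{-\chi(X)}\psi(\xi_Z(u))$, given the stated formula for $L^{-1}$. Integrality then follows from $\xi_Z(u)\in 1+u\Z[H][u]$ and $(1-u^2)^{m-\ell}\in 1+u^2\Z[[u]]$. Your optional direct verification is also sound, and the convention worry does not arise: since $\tau e_\psi=\psi(\tau)e_\psi$, the matrix of $\mathsf A_{Z,\psi}$ in the basis $(e_\psi w_v)_{v\in V'}$ is exactly the transpose of the $V'\times V'$ block of $\psi(C_{\mathcal J}A_\beta)$, with no $\bar\psi$ appearing.
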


\begin{proof}
Fix $\psi\in\widehat H$, and let
\[
z(u,\psi)=\psi(\xi_Z(u)),
\]
as in \cite[Definition~6.4]{GV26}. Under the matrix correspondence
explained above, the formula on page~43
of \cite{GV26} reads
\[
z(u,\psi)
=
\det\left(
I_\ell
-\psi(C_{\mathcal J})\psi(A_\beta)u
+\bigl(\psi(C_{\mathcal J})D-I_\ell\bigr)u^2
\right).
\]
Here $\psi(D)=D$, since the entries of $D$ are integers.

The calculation immediately preceding that formula establishes
\[
L(u,\psi,Z/X)^{-1}
=
(1-u^2)^{-\chi(X)}z(u,\psi).
\]
It follows that
\[
\begin{aligned}
\psi\bigl(\Theta_{Z/X}(u)\bigr)
&=
L(u,\psi,Z/X)^{-1}\\
&=
(1-u^2)^{m-\ell}\psi(\xi_Z(u))\\
&=
\psi\bigl((1-u^2)^{m-\ell}\xi_Z(u)\bigr).
\end{aligned}
\]
Since the characters of $H$ separate elements of $\C[H]$,
applying this coefficientwise proves
\[
\Theta_{Z/X}(u)
=
(1-u^2)^{m-\ell}\xi_Z(u).
\]

Finally, $\xi_Z(u)\in 1+u\Z[H][[u]]$, and
\[
(1-u^2)^k\in1+u\Z[H][[u]]
\qquad\text{for every }k\in\Z.
\]
Consequently, the displayed identity proves the asserted integrality
even when $m-\ell<0$. The statement over $\Zp[H]$ follows by applying
the natural coefficient map $\Z[H]\to\Zp[H]$.
\end{proof}

\subsection{The equivariant $L$-element at the top}

Return to the tower $Y_\infty/X$ from Section 3.
Using the vertex ordering $V_X=\{v_1,\ldots,v_\ell\}$ fixed in
Section~4.1, define the voltage adjacency matrix
\[
A_{\wtalpha}=(\widetilde a_{ij})
\in M_\ell(\LambdaG),
\qquad
\widetilde a_{ij}
=
\sum_{\substack{e\in E_X\\
o(e)=v_i,\ t(e)=v_j}}
\wtalpha(e).
\]
Its image under the natural coefficient map
$\LambdaG\longrightarrow\Zp[\Gcal_n]$ is the finite-level voltage
adjacency matrix $A_{\alpha_n}$. For $n\ge0$, let
\[
J_n=\bigl(\Norm{I_{v,n}}:v\in C\bigr)\subseteq\Zp[\Gcal_n].
\]
The natural maps $\Gcal_{n+1}\twoheadrightarrow\Gcal_n$ induce compatible maps
\[
\Zp[\Gcal_{n+1}]/J_{n+1}\longrightarrow\Zp[\Gcal_n]/J_n.
\]

\begin{lemma}\label{lem:theta-compatible}
The images of the finite-level equivariant $L$-elements are compatible:
\[
\Theta_{Y_{n+1}/X}(u)\longmapsto\Theta_{Y_n/X}(u)
\quad\text{in }(\Zp[\Gcal_n]/J_n)[[u]].
\]
Moreover,
\[
\varprojlim_n J_n=0.
\]
\end{lemma}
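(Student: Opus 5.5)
Use the determinant formula of Proposition~\ref{prop:finite-theta-det} at each level $n$, with $Z=Y_n$, $H=\Gcal_n$, $\mathcal J=\Ical_n$, $\beta=\alpha_n$:
\[
\Theta_{Y_n/X}(u)=(1-u^2)^{m-\ell}\det\bigl(I_\ell-C_{\Ical_n}A_{\alpha_n}u+(C_{\Ical_n}D-I_\ell)u^2\bigr).
\]
The voltage matrices are compatible: $A_{\alpha_{n+1}}\mapsto A_{\alpha_n}$, since both are images of $A_{\wtalpha}$. The factor $(1-u^2)^{m-\ell}$ and $D$ are integral and independent of $n$. So compatibility reduces to comparing the images of $C_{\Ical_{n+1}}$ and $C_{\Ical_n}$ modulo $J_n$. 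It suffices to check this entry by entry on the diagonal, because the determinant is a polynomial in the entries and reduction $\Zp[\Gcal_{n+1}]\to\Zp[\Gcal_n]/J_n$ is a ring map.

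For the diagonal entries, let $\rho:\Gcal_{n+1}\to\Gcal_n$ be the projection and split by vertex type.

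\textbf{Case $v\in A\cup B$.} Here $\wtI_v\cap\Gamma=1$, so $I_{v,n+1}\to I_{v,n}$ is an isomorphism. Hence $\rho(\Norm{I_{v,n+1}})=\Norm{I_{v,n}}$ exactly.

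\textbf{Case $v\in C$.} The kernel of $I_{v,n+1}\to I_{v,n}$ is $(\wtI_v\cap\Gamma_n)\Gamma_{n+1}/\Gamma_{n+1}$. Its order is $p$ if $n\ge k_v$, and $1$ otherwise. In either case
\[
\rho(\Norm{I_{v,n+1}})=|\ker|\cdot\Norm{I_{v,n}}\in J_n .
\]
Meanwhile $\Norm{I_{v,n}}$ itself lies in $J_n$. So both entries vanish in $\Zp[\Gcal_n]/J_n$, and the diagonal matrices agree there. This proves the first claim. The main subtlety is exactly this mismatch by the factor $p$ (the failure of inflation), which is why we must quotient by $J_n$.

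For $\varprojlim_n J_n=0$, let $(x_n)$ be a compatible family with $x_n\in J_n$, and fix $N$.

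\textbf{Step 1.} For $n\ge \max_{v\in C}k_v$, the computation above gives $\rho(\Norm{I_{v,n+1}})=p\,\Norm{I_{v,n}}$. Hence the image of $J_{n+r}$ in $\Zp[\Gcal_n]$ is contained in $p^rJ_n\subseteq p^r\Zp[\Gcal_n]$. Here we use that the transition maps are ring maps, so generators and coefficients both map appropriately.

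\textbf{Step 2.} Since $x_N$ is the image of $x_{N+r}\in J_{N+r}$ for every $r$ (enlarging $N$ if needed, as images of $x_n$ for larger $n$ determine the smaller ones), we get $x_N\in\bigcap_r p^r\Zp[\Gcal_N]=0$. This uses that $\Zp[\Gcal_N]$ is a finitely generated free $\Zp$-module.

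Thus every $x_n=0$, and the inverse limit vanishes.
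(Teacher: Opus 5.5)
Your proposal is correct and follows the paper's proof: the same entrywise comparison of the norm-weighted Ihara matrices modulo $J_n$ for the first claim, and the same key identity $\rho_{n+1,n}(\Norm{I_{v,n+1}})=p\,\Norm{I_{v,n}}$ for $v\in C$ and $n\ge\max_{v\in C}k_v$ for the second. The only difference is that the paper cites \cite[Lemma~2.7]{G25} at the end, whereas you prove the needed special case directly: the image of $J_{n+r}$ in level $n$ lies in $p^r\Zp[\Gcal_n]$, and $\bigcap_r p^r\Zp[\Gcal_n]=0$. This makes your argument self-contained.
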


\begin{proof}
Apply Proposition \ref{prop:finite-theta-det} at levels $n+1$ and $n$. The voltage matrices and degree matrices are compatible under projection. For $v\notin C$, the norm elements are compatible because the maps $\wtI_v\to I_{v,n}$ are eventually isomorphisms onto their images. For $v\in C$, the discrepancy is killed modulo $J_n$. This gives the first assertion.

For the second assertion, the case $C=\varnothing$ is immediate,
since then $J_n=0$ for every $n$. Assume $C\ne\varnothing$, and put
\[
n_0=\max_{v\in C}k_v,
\]
Write
\[
R_n=\Zp[\Gcal_n],
\]
and denote the natural transition maps by
\[
\rho_{m,n}:R_m\longrightarrow R_n
\qquad(m\ge n).
\]
For $v\in C$ and $n\ge n_0$, we have
$\Gamma_n\subseteq\wtI_v$. Since $|\Gamma_n/\Gamma_{n+1}|=p$, every element of $I_{v,n}$
has exactly $p$ preimages in $I_{v,n+1}$. Therefore,
\[
\rho_{n+1,n}\bigl(\Norm{I_{v,n+1}}\bigr)
=
p\Norm{I_{v,n}}
\qquad(v\in C,\ n\ge n_0).
\]

Each $R_n$, endowed with its $p$-adic topology, is a compact
Noetherian commutative ring, since it is finite free over $\Zp$.
The transition maps are surjective, and $p^r\to0$ in every $R_n$.
Moreover, the ideals
\[
J_n=\bigl(\Norm{I_{v,n}}:v\in C\bigr)
\]
are integral and hence have bounded denominators. We may therefore
apply \cite[Lemma~2.7]{G25} to the system indexed by $n\ge n_0$,
taking its first ideal to be our $J_n$, its second ideal to be
$R_n$, and its family of compatible generators to be empty.
The preceding identity shows that all generators belong to the
family that is multiplied by $p$ under transition. The lemma gives
\[
\varprojlim_{n\ge n_0}J_n=0.
\]
Discarding finitely many initial terms does not change the inverse
limit, so
\[
\varprojlim_n J_n=0.
\]
\end{proof}

The previous lemma allows us to make the following definition.

\begin{definition}\label{def:theta-infty}
The equivariant Artin--Ihara $L$-element at the top of the tower is the unique element
\[
\Theta_{Y_\infty/X}(u)\in\LambdaG[[u]]
\]
whose image in $(\Zp[\Gcal_n]/J_n)[[u]]$ is the image of $\Theta_{Y_n/X}(u)$ for every $n$.
\end{definition}

\subsection{Removing the infinitely ramified vertices}

Let $A_{\wtalpha}^{C}$ be the matrix obtained from the voltage adjacency matrix $A_{\wtalpha}$ by deleting the rows and columns corresponding to vertices in $C$. Similarly, let $D^{C}$ be the corresponding truncated degree matrix. Since $\wtI_v$ is finite for $v\notin C$, define
\[
C_{\wtI}^{C}=\operatorname{diag}(\Norm{\wtI_v})_{v\notin C}.
\]
Thus all three matrices have size $(\ell-|C|)\times(\ell-|C|)$ and entries in $\LambdaG$.

\begin{definition}\label{def:theta-star}
We define the modified equivariant $L$-polynomial by
\begin{equation}\label{eq:theta-star}
\ThetaStar_{Y_\infty/X}(u)
=
\det\!\left(
I_{\ell-|C|}
-C_{\wtI}^{C}A_{\wtalpha}^{C}u
+(C_{\wtI}^{C}D^{C}-I_{\ell-|C|})u^2
\right).
\end{equation}
\end{definition}

\begin{theorem}\label{thm:theta-factorization}
One has
\begin{equation}\label{eq:theta-factorization}
\Theta_{Y_\infty/X}(u)
=(1-u^2)^{m-\ell+|C|}\ThetaStar_{Y_\infty/X}(u).
\end{equation}
Moreover, $\ThetaStar_{Y_\infty/X}(u)\in\LambdaG[u]$ and its value at $u=1$ is
\begin{equation}\label{eq:theta-star-one}
\ThetaStar_{Y_\infty/X}(1)
=
\det\!\left(C_{\wtI}^{C}(D^{C}-A_{\wtalpha}^{C})\right).
\end{equation}
\end{theorem}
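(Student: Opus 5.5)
The plan is to prove the factorization first at each finite level modulo $J_n$, and then pass to the limit using the uniqueness in Definition~\ref{def:theta-infty}.

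\textbf{Step 1: finite-level determinant.} Fix $n\ge n_0=\max_{v\in C}k_v$. By Proposition~\ref{prop:finite-theta-det},
\[
\Theta_{Y_n/X}(u)=(1-u^2)^{m-\ell}\det\bigl(I_\ell-C_{\Ical_n}A_{\alpha_n}u+(C_{\Ical_n}D-I_\ell)u^2\bigr).
\]
Now reduce modulo $J_n$. For $v\in C$ the diagonal entry $\Norm{I_{v,n}}$ of $C_{\Ical_n}$ lies in $J_n$. Hence in the row indexed by $v$ the whole row of $C_{\Ical_n}A_{\alpha_n}u$ and the term $C_{\Ical_n}Du^2$ vanish modulo $J_n$. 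What remains of that row is the single diagonal entry $1-u^2$.

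\textbf{Step 2: cofactor expansion.} Expanding the determinant along the $|C|$ rows indexed by $C$ gives, in $(\Zp[\Gcal_n]/J_n)[u]$,
\[
\Theta_{Y_n/X}(u)\equiv(1-u^2)^{m-\ell+|C|}\det\bigl(I-C^{C}_{n}A^{C}_{\alpha_n}u+(C^{C}_nD^{C}-I)u^2\bigr).
\]
Here the superscript $C$ denotes deletion of the rows and columns indexed by $C$. This identity also holds when $m-\ell+|C|<0$. Indeed, $(1-u^2)^{k}$ is a unit power series for every integer $k$, and the factorization is an identity of power series. We only need to cancel $(1-u^2)^{|C|}$ against the factor $(1-u^2)^{m-\ell}$.

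\textbf{Step 3: the truncated matrices descend from the top.} The remaining matrix involves only vertices $v\notin C$. For these, $\wtI_v\cap\Gamma=1$, and $\wtI_v$ is finite and maps isomorphically onto $I_{v,n}$. So $\pi_n(\Norm{\wtI_v})=\Norm{I_{v,n}}$. Together with $\pi_n(A_{\wtalpha})=A_{\alpha_n}$, this shows that the truncated determinant is exactly the image of $\ThetaStar_{Y_\infty/X}(u)$ from Definition~\ref{def:theta-star}. That determinant is a polynomial of degree at most $2(\ell-|C|)$ with entries in $\LambdaG$, which proves $\ThetaStar_{Y_\infty/X}(u)\in\LambdaG[u]$.

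\textbf{Step 4: passage to the limit.} Put $F(u)=(1-u^2)^{m-\ell+|C|}\ThetaStar_{Y_\infty/X}(u)\in\LambdaG[[u]]$. By Steps 2 and 3, the image of $F(u)$ in $(\Zp[\Gcal_n]/J_n)[[u]]$ agrees with that of $\Theta_{Y_n/X}(u)$ for all $n\ge n_0$. For $n<n_0$ the same holds by the compatibility in Lemma~\ref{lem:theta-compatible}, since the maps to lower levels factor through level $n_0$.

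It remains to check uniqueness, which underlies Definition~\ref{def:theta-infty}. Suppose an element of $\LambdaG$ lies in the kernel of every map $\LambdaG\to\Zp[\Gcal_n]/J_n$. Then its image in each $\Zp[\Gcal_n]$ lies in $J_n$, and these images form a compatible system. So the element lies in $\varprojlim_n J_n=0$. Applying this coefficientwise in $u$ gives \eqref{eq:theta-factorization}.

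\textbf{Step 5: value at $u=1$.} Substituting $u=1$ into \eqref{eq:theta-star} gives
\[
I-C^{C}_{\wtI}A^{C}_{\wtalpha}+C^{C}_{\wtI}D^{C}-I=C^{C}_{\wtI}\bigl(D^{C}-A^{C}_{\wtalpha}\bigr),
\]
which is \eqref{eq:theta-star-one}.

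\textbf{Main obstacle.} The delicate point is Step 4. One must make sure that the element $\Theta_{Y_\infty/X}(u)$ of Definition~\ref{def:theta-infty} is really pinned down by its images modulo $J_n$. This requires using $\varprojlim_n J_n=0$ on compatible systems of residues, rather than merely on arbitrary elements of each $J_n$. The determinant manipulation modulo $J_n$ in Steps 1 and 2 is routine by comparison.
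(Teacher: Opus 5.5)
Your proposal is correct and follows essentially the same route as the paper. Like the paper, you reduce the finite-level determinant of Proposition~\ref{prop:finite-theta-det} modulo $J_n$, note that the rows indexed by $C$ collapse to $1-u^2$ on the diagonal (the paper phrases this as a block-triangular form after ordering $C$ last), identify the remaining block with the image of $\ThetaStar_{Y_\infty/X}(u)$, and pass to the limit via Definition~\ref{def:theta-infty}. Your explicit uniqueness check via $\varprojlim_n J_n=0$ spells out what the paper leaves implicit. Restricting to $n\ge n_0$ and handling $n<n_0$ separately is harmless but unnecessary, since Steps 1--3 hold for every $n$.
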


\begin{proof}
At level $n$, Proposition \ref{prop:finite-theta-det} gives
\[
\Theta_{Y_n/X}(u)
=(1-u^2)^{m-\ell}
\det\!\left(I_\ell-C_nA_nu+(C_nD-I_\ell)u^2\right),
\]
where $C_n=\operatorname{diag}(\Norm{I_{v,n}})$. Work modulo $J_n$. For every $v\in C$, the corresponding diagonal entry of $C_n$ vanishes. After ordering the vertices so that those in $C$ come last, the matrix becomes block triangular of the form
\[
\begin{pmatrix}
K_n^{C}(u)&\ast\\
0&(1-u^2)I_{|C|}
\end{pmatrix},
\]
where
\[
K_n^{C}(u)=
I_{\ell-|C|}-C_n^{C}A_n^{C}u+(C_n^{C}D^{C}-I_{\ell-|C|})u^2.
\]
Therefore, in $(\Zp[\Gcal_n]/J_n)[[u]]$,
\[
\Theta_{Y_n/X}(u)
=(1-u^2)^{m-\ell+|C|}\det K_n^{C}(u).
\]
For $v\notin C$, the natural map $\wtI_v\to I_{v,n}$ is eventually an isomorphism, and the truncated matrices are compatible under projection. Passing to the inverse limit and using Definition \ref{def:theta-infty} proves \eqref{eq:theta-factorization}. Setting $u=1$ in \eqref{eq:theta-star} gives \eqref{eq:theta-star-one}.
\end{proof}

For later use, we modify the ramification at a chosen subset of the finite-inertia vertices. If $B'\subseteq B$, set
\[
\wtI_v^{B'}=
\begin{cases}
\Gcal,&v\in B',\\
\wtI_v,&v\notin B',
\end{cases}
\]
and put
\[
\wtIcal^{B'}=\{(v,\wtI_v^{B'}):v\in V_X\},
\qquad
Y_\infty^{B'}=X(\Gcal,\wtIcal^{B'},\wtalpha).
\]
Thus the vertices in $B'$ are made totally ramified in $Y_\infty^{B'}/X$. Its set of infinitely ramified vertices is $B'\cup C$. We write
\[
\ThetaStar_{Y_\infty^{B'}/X}(u)
\]
for the polynomial of Definition \ref{def:theta-star} attached to this modified tower. Explicitly, if $A_{\wtalpha}^{B'}$ and $D^{B'}$ denote the matrices obtained by deleting rows and columns indexed by $B'\cup C$, and
\[
C_{\wtI}^{B'}=\operatorname{diag}(\Norm{\wtI_v})_{v\in A\cup(B\setminus B')},
\]
then
\begin{equation}\label{eq:theta-Bprime-one}
\ThetaStar_{Y_\infty^{B'}/X}(1)
=
\det\!\left(C_{\wtI}^{B'}(D^{B'}-A_{\wtalpha}^{B'})\right).
\end{equation}

\section{A presentation and the equivariant main conjecture}

We now construct a free presentation of the equivariant Picard Iwasawa module and compute its Fitting ideal using the determinant formulas of Section 4. This proves the equivariant main conjecture, from which we recover the main conjecture for branched \(\mathbb Z_p\)-towers as a special case.

\subsection{A free presentation}

For every $v\in B\cup C$, fix topological generators
\[
g_{v,1},\ldots,g_{v,r_v}
\]
of $\wtI_v$. Consider the free $\LambdaG$-module
\[
\mathcal D^{\nr}
:=\Div_{\LambdaG}(Y_\infty^{\nr})
=\bigoplus_{v\in V_X}\LambdaG\,w_{v,\infty}^{\nr}.
\]
Define a $\LambdaG$-linear map
\begin{equation}\label{eq:Delta-map}
\Delta:
\left(\bigoplus_{v\in A\cup B}\LambdaG e_v\right)
\oplus
\left(\bigoplus_{v\in B\cup C}\bigoplus_{j=1}^{r_v}\LambdaG f_{v,j}\right)
\longrightarrow \mathcal D^{\nr}
\end{equation}
by
\begin{align}
\Delta(e_v)&=\Norm{\wtI_v}\,L_{Y_\infty^{\nr}}(w_{v,\infty}^{\nr}),
&&v\in A\cup B,\label{eq:Delta-first}\\
\Delta(f_{v,j})&=(g_{v,j}-1)w_{v,\infty}^{\nr},
&&v\in B\cup C.\label{eq:Delta-second}
\end{align}
For $v\in A$ we use the convention $\Norm{\wtI_v}=1$.

\begin{theorem}\label{thm:presentation}
The map \eqref{eq:Delta-map} is a presentation of the equivariant Picard Iwasawa module. More precisely, there is an exact sequence
\begin{equation}\label{eq:presentation-sequence}
\left(\bigoplus_{v\in A\cup B}\LambdaG e_v\right)
\oplus
\left(\bigoplus_{v\in B\cup C}\bigoplus_{j=1}^{r_v}\LambdaG f_{v,j}\right)
\xrightarrow{\Delta}
\mathcal D^{\nr}
\longrightarrow
\Pic_{\LambdaG}(Y_\infty)
\longrightarrow0.
\end{equation}
\end{theorem}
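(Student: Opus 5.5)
The plan is to combine Theorem~\ref{thm:Pic-quotient} with an explicit description of the surjection $\iota_*:\mathcal D^{\nr}\to\Div_{\LambdaG}(Y_\infty)$. The cokernel of $\Delta$ will be identified in two stages: first quotient by the image of the $f_{v,j}$, then by the image of the $e_v$.

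\textbf{Step 1: the $f_{v,j}$.} The map $\iota_*$ sends $w_{v,\infty}^{\nr}$ to $w_{v,\infty}$. On the $v$-component it is the natural quotient $\LambdaG\to\Zp[[\Gcal/\wtI_v]]$. The kernel of this quotient is the ideal $\aug{\wtI_v}\LambdaG$, which is the kernel of $\Zp[[\Gcal]]\to\Zp[[\Gcal/\wtI_v]]$. I would justify this by passing to finite levels $\Zp[\Gcal_n]\to\Zp[\Gcal_n/I_{v,n}]$, where the kernel is the augmentation ideal of $I_{v,n}$, and then taking inverse limits. This is legitimate because all the modules are compact and the inverse limit of the images of a finitely generated ideal is its closure, which equals the ideal itself. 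By \eqref{eq:augmentation-generators}, this ideal is generated by the elements $g_{v,j}-1$. For $v\in A$ the kernel is zero. Hence
\[
\kerop(\iota_*)=\sum_{v\in B\cup C}\sum_j \LambdaG(g_{v,j}-1)w^{\nr}_{v,\infty},
\]
which is exactly the image of the $f_{v,j}$, and $\mathcal D^{\nr}/\langle\Delta(f_{v,j})\rangle\simeq\Div_{\LambdaG}(Y_\infty)$.

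\textbf{Step 2: the $e_v$.} Under $\iota_*$, the element $\Delta(e_v)=\Norm{\wtI_v}L_{Y^{\nr}_\infty}(w^{\nr}_{v,\infty})$ maps to $\Norm{\wtI_v}P_{v,\infty}$ by \eqref{eq:P-v-infty}. This equals $P_{v,\infty}$ for $v\in A$, using the convention $\Norm{\wtI_v}=1$, and equals the generator of $\Princ^B$ for $v\in B$. Therefore $\iota_*$ carries the image of $\Delta$ onto $\Princ^A+\Princ^B$. Together with Step 1, this gives
\[
\coker\Delta\simeq\Div_{\LambdaG}(Y_\infty)/(\Princ^A+\Princ^B),
\]
and Theorem~\ref{thm:Pic-quotient} identifies the right-hand side with $\Pic_{\LambdaG}(Y_\infty)$.

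\textbf{Main obstacle.} The subtle point is the kernel computation in Step 1 when $v\in C$. There $\wtI_v$ is infinite, so the finite-level kernels are ideals of growing groups, and one must check that their inverse limit is generated by the finitely many elements $g_{v,j}-1$ rather than being merely the closure of what they generate. I would handle this with the standard fact that a finitely generated ideal of the compact ring $\LambdaG$ is closed. I would also use that the surjectivity of the finite-level maps, together with Mittag-Leffler (all modules involved are compact), makes the inverse limit of the kernels equal to the kernel of the limit map.
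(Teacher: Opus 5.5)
Your proposal is correct and follows the paper's proof essentially step for step: you identify $\kerop(\iota_*)$ with the image of the $f_{v,j}$, observe that the elements $\iota_*\Delta(e_v)=\Norm{\wtI_v}P_{v,\infty}$ generate $\Princ^A+\Princ^B$, and conclude with Theorem~\ref{thm:Pic-quotient}. Your additional argument that the kernel of $\LambdaG\to\Zp[[\Gcal/\wtI_v]]$ equals the finitely generated, hence closed, ideal $(g_{v,j}-1)_j$ makes explicit a point the paper takes for granted.
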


\begin{proof}
The morphism $\iota:Y_\infty^{\nr}\to Y_\infty$ induces a surjective $\LambdaG$-module map
\[
\iota_*:\mathcal D^{\nr}\longrightarrow\Div_{\LambdaG}(Y_\infty).
\]
On the $v$-component, this is the natural quotient map
\[
\LambdaG\longrightarrow\Zp[[\Gcal/\wtI_v]].
\]
Therefore
\begin{equation}\label{eq:ker-iota}
\kerop(\iota_*)
=
\bigoplus_{v\in B\cup C}\aug{\wtI_v}\,w_{v,\infty}^{\nr}
=
\sum_{v\in B\cup C}\sum_{j=1}^{r_v}
\LambdaG(g_{v,j}-1)w_{v,\infty}^{\nr}.
\end{equation}
This is exactly the image under $\Delta$ of the second family of summands in \eqref{eq:Delta-map}.

For $v\in A\cup B$, applying $\iota_*$ to \eqref{eq:Delta-first} gives
\[
\iota_*\Delta(e_v)=\Norm{\wtI_v}P_{v,\infty}.
\]
As $v$ ranges through $A\cup B$, these elements generate $\Princ^A+\Princ^B$. Hence the cokernel of $\Delta$ is
\[
\frac{\Div_{\LambdaG}(Y_\infty)}{\Princ^A+\Princ^B},
\]
which is $\Pic_{\LambdaG}(Y_\infty)$ by Theorem \ref{thm:Pic-quotient}.
\end{proof}

\subsection{The Fitting ideal}

We can now state and prove the equivariant main conjecture for the tower $Y_\infty/X$.

\begin{theorem}[Equivariant main conjecture]\label{thm:EMC}
With the notation above,
\begin{equation}\label{eq:EMC}
\Fitt_{\LambdaG}\!\left(\Pic_{\LambdaG}(Y_\infty)\right)
=
\left(
\ThetaStar_{Y_\infty^{B'}/X}(1)
\prod_{v\in B'\cup C}\aug{\wtI_v}
\ ;\ B'\subseteq B
\right).
\end{equation}
Here the right-hand side denotes the ideal generated, as $B'$ ranges over all subsets of $B$, by the products of the element $\ThetaStar_{Y_\infty^{B'}/X}(1)$ with the indicated augmentation ideals.
\end{theorem}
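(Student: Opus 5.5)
We will compute the Fitting ideal directly from the presentation of Theorem~\ref{thm:presentation}. Order the vertices so that $V_X=A\sqcup B\sqcup C$. With respect to the basis $\{w^{\nr}_{v,\infty}\}_{v\in V_X}$ of $\mathcal D^{\nr}$, the columns of the presentation matrix are of two types. The first type consists of the vectors $\Delta(e_v)=\Norm{\wtI_v}L_{Y_\infty^{\nr}}(w^{\nr}_{v,\infty})$ for $v\in A\cup B$. Their coordinates form the $v$-th row of $C_{\wtI}(D-A_{\wtalpha})$, where we record origins by rows and termini by columns as in \eqref{eq:voltage-adjacency}; we work with the transpose if necessary, since this does not change minors. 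The second type consists of the vectors $\Delta(f_{v,j})=(g_{v,j}-1)w^{\nr}_{v,\infty}$ for $v\in B\cup C$, which are supported at a single coordinate. By definition, $\Fitt_{\LambdaG}(\Pic_{\LambdaG}(Y_\infty))$ is generated by the $\ell\times\ell$ minors of this $\ell\times(|A|+|B|+\sum_v r_v)$ matrix.

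The key step is to classify these maximal minors. A choice of $\ell$ columns amounts to the following data: a subset $S\subseteq A\cup B$ of Laplacian columns, together with, for each vertex $v$, some number of $f$-columns attached to $v$. An $f$-column attached to $v$ has its only nonzero entry in row $v$. Hence, if two chosen $f$-columns share the same $v$, the minor vanishes. Otherwise, expanding along the $f$-columns shows that the minor equals $\pm\prod_{v\in T}(g_{v,j_v}-1)$ times the minor of the Laplacian block with rows $V_X\setminus T$ and columns $S$. Here $T\subseteq B\cup C$ is the set of vertices carrying an $f$-column, and $|S|=\ell-|T|$. The counting forces $T\supseteq C$, because $S\subseteq A\cup B$. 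So write $T=B''\cup C$ with $B''\subseteq B$. The complementary minor then has $|A|+|B\setminus B''|$ rows but columns chosen from $A\cup B$ of the same cardinality.

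Call the choice \emph{diagonal} when $S=A\cup(B\setminus B'')$. In that case, by \eqref{eq:theta-Bprime-one}, the minor equals $\ThetaStar_{Y_\infty^{B''}/X}(1)$ up to the ordering of rows versus columns. Letting the $j_v$ vary and using \eqref{eq:augmentation-generators}, these diagonal minors generate exactly the summand indexed by $B'=B''$ on the right of \eqref{eq:EMC}. This proves the inclusion $\supseteq$.

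For $\subseteq$, we must handle the non-diagonal choices, where $S$ contains a Laplacian column $e_w$ with $w\in B''$ while some $u\in A\cup(B\setminus B'')$ is omitted from $S$. This is the main obstacle. We plan to use the relation $(g-1)\Norm{\wtI_w}=0$ for $g\in\wtI_w$ and $w\in B$. The column $e_w$ is $\Norm{\wtI_w}$ times a row of $D-A_{\wtalpha}$, and row $w$ is the row being multiplied by $(g_{w,j_w}-1)$ from the $f$-column. We will expand the minor by multilinearity and argue that it lies in the summand for the smaller set $B''\setminus\{w'\}$, or in summands with extra factors $\Norm{\wtI_w}(g-1)=0$. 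More concretely, the minor equals $\pm\prod_{T}(g-1)$ times a minor $M$ of $C_{\wtI}(D-A_{\wtalpha})$ with rows $S$ and columns $V_X\setminus T$.

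If we instead view $M$ via Cauchy--Binet, or via the Laplace expansion involving the rows $w\in S\cap B''$, the factor $\Norm{\wtI_w}$ pulls out of row $w$. Then $\Norm{\wtI_w}\cdot(g_{w,j}-1)=0$ kills the minor whenever $w\in S\cap T$. It follows that non-diagonal contributions vanish, except when $S\cap T=\varnothing$. In that case $S=A\cup(B\setminus B'')$ by cardinality, and the choice is diagonal.

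It remains to check that the coefficient bookkeeping matches: $w\in S\cap T$ lies in $B''\subseteq B$, so $\Norm{\wtI_w}$ is a genuine finite norm annihilated by $\aug{\wtI_w}$. Once this is verified, both inclusions hold and \eqref{eq:EMC} follows.
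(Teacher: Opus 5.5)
Your proposal is correct and follows the paper's proof. The paper likewise kills every minor that contains both the Laplacian column of some $v\in B$ and an augmentation column at $v$, using $\Norm{\wtI_v}(g_{v,j}-1)=0$, and then reads off the surviving block-triangular minors indexed by $B'\subseteq B$. The norm factors straight out of the whole column $e_v$ by multilinearity, so this step is not really an obstacle and no Cauchy--Binet argument is needed.

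One correction: $T\supseteq C$ does not follow from counting alone. For example, $S=\{b_1\}$, $T=\{b_1,b_2\}$ with $C=\{c\}$ is consistent with $|S|+|T|=\ell$. The inclusion comes from $S\cap T=\varnothing$ together with $|S|+|T|=\ell$, which gives $S\sqcup T=V_X$. Your vanishing argument supplies exactly this, so run it before, rather than after, writing $T=B''\cup C$.
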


\begin{proof}
Let $r=|V_X|$. By Theorem \ref{thm:presentation}, the Fitting ideal on the left of \eqref{eq:EMC} is generated by the $r\times r$ minors of a matrix for $\Delta$.

Order the rows by the vertices of $X$. For each $v\in A\cup B$, there is a distinguished ``Laplacian column'' coming from $e_v$. For each $v\in B\cup C$, there are $r_v$ ``augmentation columns'' coming from the elements $f_{v,j}$; every such column is supported only in the row indexed by $v$, with entry $g_{v,j}-1$.

Suppose first that an $r\times r$ minor contains both the Laplacian column attached to some $v\in B$ and an augmentation column attached to the same vertex. Expanding along the latter column shows that the determinant contains a factor
\[
\Norm{\wtI_v}(g_{v,j}-1)=0.
\]
Thus such a minor vanishes. A nonzero minor also cannot contain two augmentation columns attached to the same vertex, since those columns are supported in the same row. It follows that every potentially nonzero maximal minor is obtained by choosing a subset $B'\subseteq B$, taking Laplacian columns for all vertices in $A\cup(B\setminus B')$, and taking exactly one augmentation column for each vertex in $B'\cup C$.

Fix $B'\subseteq B$ and, for each $v\in B'\cup C$, choose an index $j_v\in\{1,\ldots,r_v\}$. After reordering rows and columns, the corresponding submatrix has block form
\begin{equation}\label{eq:block-minor}
P=
\begin{pmatrix}
C_{\wtI}^{B'}(D^{B'}-A_{\wtalpha}^{B'})&0\\
\ast&\operatorname{diag}(g_{v,j_v}-1)_{v\in B'\cup C}
\end{pmatrix}.
\end{equation}
Indeed, the upper-left block is obtained from the weighted Laplacian columns after deleting the rows and columns indexed by $B'\cup C$. Therefore, by \eqref{eq:theta-Bprime-one},
\[
\det\!\left(C_{\wtI}^{B'}(D^{B'}-A_{\wtalpha}^{B'})\right)
=\ThetaStar_{Y_\infty^{B'}/X}(1).
\]
Consequently,
\begin{equation}\label{eq:minor-value}
\det(P)
=
\ThetaStar_{Y_\infty^{B'}/X}(1)
\prod_{v\in B'\cup C}(g_{v,j_v}-1).
\end{equation}
As the indices $j_v$ vary, the products on the right generate
\[
\prod_{v\in B'\cup C}\aug{\wtI_v}
\]
by \eqref{eq:augmentation-generators}. Finally, allowing $B'$ to vary over all subsets of $B$ gives exactly the ideal on the right-hand side of \eqref{eq:EMC}.
\end{proof}

\begin{remark}\label{rem:ramification-meaning}
The three kinds of vertices have distinct roles in \eqref{eq:EMC}. Vertices in $A$ contribute only through the modified $L$-value. Vertices in $C$ are forced to contribute their augmentation ideals in every generator of the Fitting ideal. Vertices in $B$ interpolate between the two possibilities: for each generator one either retains the finite norm factor inside the determinant or replaces that vertex by its augmentation ideal. This is the source of the sum over subsets $B'\subseteq B$.
\end{remark}

\begin{remark}\label{rem:arithmetic-analogy}
Theorem~\ref{thm:EMC} is a graph-theoretic analogue of the integral
Fitting-ideal formulations of equivariant main conjectures in number
theory. The module
$\operatorname{Pic}_{\Lambda(\mathcal G)}(Y_\infty)$
plays the role of an Iwasawa module obtained from class groups, while
the modified equivariant Ihara elements
$\Theta^*_{Y_\infty^{B'}/X}(1)$
play the role of equivariant $p$-adic $L$-elements. The broader
formulation of main conjectures through Fitting ideals is exemplified
by Greither--Popescu~\cite[Theorem~5.6]{GrP15}, proved under the
vanishing of the relevant $\mu$-invariant.

A closer comparison, accounting for the local factors, is provided by
Gambheera~\cite[Theorem~1.3]{G25} and
Gambheera--Popescu~\cite[Theorem~4.9]{GP25RW}.
These results compute the Fitting ideal of the minus part of an
inverse limit of modified class groups in terms of an equivariant
$p$-adic $L$-element and explicit local ideals determined by inertia
and decomposition groups. In Theorem~\ref{thm:EMC}, the finite inertia
norms occurring in the determinants and the augmentation ideals
$\Delta_{\widetilde I_v}$ have an analogous role: they incorporate
the ramification data into the formula for the Fitting ideal.
\end{remark}
\begin{remark}
If $Y_\infty/X$ is unramified, then $B=C=\varnothing$, and
Theorem~\ref{thm:EMC} recovers the Picard-group Fitting formula of
Kataoka~\cite[Proposition~5.10]{Kataoka}, with
$\Theta^*_{Y_\infty/X}(1)$ equal to his determinant
$Z_{X,\Gamma}$.
\end{remark}

\subsection{Recovery of the one-variable main conjecture}

We now recover the main conjecture for a single branched $\Zp$-tower from \cite{GV24,GV26}. Let
\[
X=X_0\longleftarrow X_1\longleftarrow\cdots\longleftarrow X_\infty
\]
be a branched $\Zp$-tower arising from a voltage assignment $\alpha:E_X\to\Gamma\simeq\Zp$, and assume every $X_n$ is connected. Put
\[
\Lambda=\Zp[[\Gamma]].
\]
If $v$ is ramified, write
\[
I_v=\Gamma^{p^{k_v}}.
\]

\begin{corollary}[The classical graph-theoretic main conjecture]\label{cor:classical-MC}
With the notation above,
\begin{equation}\label{eq:classical-MC}
\charideal_\Lambda\!\left(\Pic_\Lambda(X_\infty)\right)
=
\left(
\ThetaStar_{X_\infty/X}(1)
\prod_{v\in V_X^{\ram}}(\gamma^{p^{k_v}}-1)
\right).
\end{equation}
\end{corollary}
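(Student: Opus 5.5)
This is the special case $G=1$ of Theorem~\ref{thm:EMC}, so the plan is to specialize and then pass from Fitting ideals to characteristic ideals. Take $G=1$, $\Gcal=\Gamma$, $\wtalpha=\alpha$ and $\wtI_v=I_v$. A subgroup of $\Gamma\simeq\Zp$ is either trivial or of the form $\Gamma^{p^{k_v}}$, which is infinite. Hence $B=\varnothing$ and $C=V_X^{\ram}$, and the only subset $B'\subseteq B$ is $B'=\varnothing$. The right-hand side of \eqref{eq:EMC} therefore reduces to the single product
\[
\ThetaStar_{X_\infty/X}(1)\prod_{v\in C}\aug{I_v}.
\]
Each $I_v=\Gamma^{p^{k_v}}$ is procyclic, topologically generated by $\gamma^{p^{k_v}}$. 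By \eqref{eq:augmentation-generators}, $\aug{I_v}=(\gamma^{p^{k_v}}-1)$ is principal, so the Fitting ideal is the principal ideal displayed in \eqref{eq:classical-MC}.

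Next, look at the presentation in Theorem~\ref{thm:presentation} with $r_v=1$ for every $v\in C$. The generators are $e_v$ for $v\in A$ and $f_v$ for $v\in C$, so there are $|A|+|C|=|V_X|$ of them. The presentation is therefore square:
\[
\Lambda^{\ell}\xrightarrow{\Delta}\Lambda^{\ell}\to\Pic_\Lambda(X_\infty)\to0.
\]
The Fitting ideal is generated by $\det\Delta$, which by the block computation \eqref{eq:minor-value} equals $\ThetaStar_{X_\infty/X}(1)\prod_{v\in C}(\gamma^{p^{k_v}}-1)$.

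To identify this with the characteristic ideal, I will use the standard fact for finitely generated modules over $\Lambda\simeq\Zp[[T]]$: if $M$ has a square presentation with $\det\Delta\neq0$, then $\Delta$ is injective, $M$ has projective dimension at most one and is torsion, and
\[
\charideal_\Lambda(M)=\Fitt_\Lambda(M)=(\det\Delta).
\]
This holds because both ideals are multiplicative in short exact sequences of such modules and agree on elementary modules $\Lambda/(f)$. Equivalently, localize at each height-one prime, where $\Lambda_{\mathfrak p}$ is a DVR and both ideals have the same length. Since the cokernel of $\Delta$ has no nonzero pseudo-null submodule, no pseudo-isomorphism correction arises.

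The main obstacle is verifying $\det\Delta\ne0$. Each factor $\gamma^{p^{k_v}}-1$ is nonzero in the domain $\Lambda$, so it remains to show $\ThetaStar_{X_\infty/X}(1)\ne0$. My first attempt is descent. Since every $X_n$ is connected, $\Pic^0_p(X_n)$ is finite, and the torsion-freeness of $\Pic_p(X_n)/\Pic^0_p(X_n)\simeq\Zp$ forces $\Pic_\Lambda(X_\infty)$ to have $\Lambda$-rank zero. Rank zero is equivalent to $\det\Delta\neq0$ for a square presentation over a domain. Concretely, using Proposition~\ref{prop:finite-codescent} and Lemma~\ref{lem:kernel-varpi}, $\Pic_\Lambda(X_\infty)/\omega_n$ surjects onto $\Pic_p(X_n)$, whose $\Zp$-rank is $1$ for all large $n$. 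If $\Pic_\Lambda(X_\infty)$ had positive $\Lambda$-rank, the $\Zp$-ranks of the coinvariants would grow like $p^n$. Making this precise requires bounding $\Zp$-ranks of the kernels $\Princ^{C,n}$ modulo $\omega_n$, which is the step I expect to need care.

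As a fallback, I can evaluate at the trivial character. This reduces $\ThetaStar(1)$ to a reduced-Laplacian determinant of $X$ with the rows and columns of $C$ deleted, weighted by the norms of the finite inertia groups (here all trivial). When $C\neq\varnothing$ this is a positive spanning-forest count by the matrix-tree theorem, so $\ThetaStar(1)\neq0$ as its image is nonzero. When $C=\varnothing$ the tower is unramified and the claim is the known unramified case of \cite{GV26}.
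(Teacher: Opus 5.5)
Your proposal is correct and follows the paper's proof. You specialize Theorem~\ref{thm:EMC} to $G=1$ (so $B=\varnothing$, $C=V_X^{\ram}$, and each $\aug{I_v}=(\gamma^{p^{k_v}}-1)$), note that the presentation is square, and use that Fitting and characteristic ideals coincide for a torsion $\Lambda$-module with a square presentation. The paper only asserts torsion, and your fallback supplies it for $C\neq\varnothing$: the augmentation image of $\ThetaStar_{X_\infty/X}(1)$ is the principal minor of $L_X$ with the $C$-rows and columns removed, which is a positive rooted-forest count. For $C=\varnothing$ you need not cite the unramified literature: for a nontrivial character $\psi$ of order $p^n$, $\psi(\det(D-A_{\alpha}))\neq0$, because the Laplacian of the connected graph $X_n$ has only the constants as kernel, and these lie in the trivial isotypic component.
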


\begin{proof}
Apply Theorem \ref{thm:EMC} with $G=1$ and $\Gcal=\Gamma$. Since $\Gamma\simeq\Zp$ has no nontrivial finite subgroup, $B=\varnothing$ and $C=V_X^{\ram}$. Moreover,
\[
\aug{I_v}=(\gamma^{p^{k_v}}-1).
\]
Thus Theorem \ref{thm:EMC} gives the right-hand side of \eqref{eq:classical-MC} as the Fitting ideal of $\Pic_\Lambda(X_\infty)$.

In this case every nontrivial inertia group is procyclic, so the presentation in Theorem \ref{thm:presentation} is square. The module is torsion over the regular local domain $\Lambda$, and its zeroth Fitting ideal agrees with its characteristic ideal. This proves the formula.
\end{proof}

\begin{remark}
Under a choice of topological generator $\gamma$ and the identification $\Lambda\simeq\Zp[[T]]$ given by $\gamma\mapsto1+T$, the factors $\gamma^{p^{k_v}}-1$ become
\[
(1+T)^{p^{k_v}}-1.
\]
Together with the relation between $\ThetaStar_{X_\infty/X}(1)$ and the power series denoted $g(T)$ in \cite[Section 6]{GV26}, Corollary \ref{cor:classical-MC} recovers the main-conjecture formula of \cite[Theorems 5.9 and 6.1]{GV24} and its Artin--Ihara formulation in \cite[Theorem 6.14]{GV26}.
\end{remark}

\section{Finite-level applications}

We now turn to finite abelian branched covers and establish a Fitting-ideal formula for their \(p\)-adic Picard groups. We then derive annihilation consequences and discuss the parallels with Kurihara’s formula and Brumer–Stark-type statements in number theory.

\subsection{A finite-level Fitting formula}

Let
\[
Y=X(G,\Ical,\alpha)\longrightarrow X
\]
be a finite abelian branched cover of finite connected graphs. Put
\[
V_X^{\ram}=\{v\in V_X:I_v\ne1\}.
\]
For $v\in V_X^{\ram}$, fix generators $g_{v,1},\ldots,g_{v,r_v}$ of $I_v$. If $B'\subseteq V_X^{\ram}$, define a modified inertia family
\[
I_v^{B'}=
\begin{cases}
G,&v\in B',\\
I_v,&v\notin B',
\end{cases}
\]
and let
\[
Y^{B'}=X(G,\Ical^{B'},\alpha).
\]
Thus the vertices in $B'$ are made totally ramified.

Delete from $A_\alpha$ and $D$ the rows and columns indexed by $B'$, and denote the resulting matrices by $A_\alpha^{B'}$ and $D^{B'}$. Set
\[
C_I^{B'}=\operatorname{diag}(\Norm{I_v})_{v\notin B'}.
\]
We define the modified equivariant $L$-polynomial
\begin{equation}\label{eq:finite-theta-star}
\ThetaStar_{Y^{B'}/X}(u)
=
\det\!\left(
I_{\ell-|B'|}
-C_I^{B'}A_\alpha^{B'}u
+(C_I^{B'}D^{B'}-I_{\ell-|B'|})u^2
\right).
\end{equation}
This definition is intrinsic to the finite cover. If $Y/X$ is embedded as the bottom layer of a tower as in Section 3, then \eqref{eq:finite-theta-star} is the image of $\ThetaStar_{Y_\infty^{B'}/X}(u)$ under $\LambdaG\to\Zp[G]$. In particular it is independent of the chosen tower.

\begin{theorem}[Finite-level equivariant Fitting formula]\label{thm:finite-Fitt}
One has
\begin{equation}\label{eq:finite-Fitt}
\Fitt_{\Zp[G]}\!\left(\Pic_p(Y)\right)
=
\left(
\ThetaStar_{Y^{B'}/X}(1)
\prod_{v\in B'}\aug{I_v}
\ ;\ B'\subseteq V_X^{\ram}
\right).
\end{equation}
\end{theorem}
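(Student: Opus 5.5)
The plan is to repeat the argument of Theorems~\ref{thm:presentation} and \ref{thm:EMC} directly at finite level, with $R=\Zp[G]$ in place of $\LambdaG$. Every vertex of $V_X^{\ram}$ now plays the role of a vertex in $B$: its inertia is finite and nontrivial, and there is no analogue of $C$.

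\textbf{Presentation.} Let $\mathcal D^{\nr}=\bigoplus_{v\in V_X}R\,w_v^{\nr}$ be the divisor module of the unramified cover $X(G,\{(v,1)\},\alpha)$. This module is free of rank $\ell$. Define
\[
\Delta:\Bigl(\bigoplus_{v\in V_X}R\,e_v\Bigr)\oplus\Bigl(\bigoplus_{v\in V_X^{\ram}}\bigoplus_{j=1}^{r_v}R\,f_{v,j}\Bigr)\longrightarrow\mathcal D^{\nr}
\]
by $e_v\mapsto \Norm{I_v}L(w_v^{\nr})$ and $f_{v,j}\mapsto(g_{v,j}-1)w_v^{\nr}$.

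The push-forward $\iota_*:\mathcal D^{\nr}\to\Div_p(Y)$ is surjective. Its kernel is $\bigoplus_v\aug{I_v}w_v^{\nr}$, which is exactly the span of the $f$-columns. Moreover, $\iota_*\Delta(e_v)=\Norm{I_v}P_v$. By \cite[Corollary~4.7]{GV24}, as already used in the proof of Proposition~\ref{prop:finite-codescent}, these elements generate $\Princ_p(Y)$ over $R$. Hence $\coker\Delta\simeq\Pic_p(Y)$. This step is essentially Proposition~\ref{prop:finite-codescent} at $n=0$ with no $C$-vertices, so I expect it to be routine.

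\textbf{Minors.} The Fitting ideal is generated by the $\ell\times\ell$ minors of the matrix of $\Delta$, with rows indexed by $V_X$. Every $f$-column is supported in a single row $v$. So a nonzero minor uses at most one $f$-column per vertex.

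If a minor uses both $e_v$ and some $f_{v,j}$, expand along the $f_{v,j}$ column. The cofactor is then a minor in which the $e_v$ column has its $v$-row entry deleted. Its remaining entries come from $\Norm{I_v}L(w_v^{\nr})$, so every term is divisible by $\Norm{I_v}$. The product with $g_{v,j}-1$ therefore vanishes, since $\Norm{I_v}(g-1)=0$ for $g\in I_v$. This column-divisibility point is the only place requiring care, and it is the same as in Theorem~\ref{thm:EMC}.

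It follows that the surviving minors are indexed by a subset $B'\subseteq V_X^{\ram}$ together with a choice $j_v$ for each $v\in B'$. The corresponding submatrix is block triangular, exactly as in \eqref{eq:block-minor}. Its determinant is
\[
\det\bigl(C_I^{B'}(D^{B'}-A_\alpha^{B'})\bigr)\prod_{v\in B'}(g_{v,j_v}-1).
\]
Here I will check the row/column convention: the column of $e_v$ records $\Norm{I_v}$ times the $v$-th Laplacian row, so the block is $C_I^{B'}$ times the truncated $D-A_\alpha$, up to transpose, which does not affect the determinant.

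Setting $u=1$ in \eqref{eq:finite-theta-star} shows that this determinant equals $\ThetaStar_{Y^{B'}/X}(1)$. Finally, since $g_{v,1},\dots,g_{v,r_v}$ generate $I_v$, the products over choices of $j_v$ generate $\prod_{v\in B'}\aug{I_v}$. Summing over $B'$ gives \eqref{eq:finite-Fitt}.

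\textbf{Alternative route.} One could instead try to deduce the result from Theorem~\ref{thm:EMC} by base change along $\LambdaG\to\Zp[G]$, using $\Pic_{\LambdaG}(Y_\infty)\otimes\Zp[G]\simeq\Pic_p(Y)$. However, this requires embedding $Y$ in a tower in which the inertia stays finite. It also requires matching the finite-level sets $B'\subseteq V_X^{\ram}$ with those of the tower. The direct argument avoids both issues, so it is the one I would follow.
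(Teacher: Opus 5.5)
Your proposal is correct and follows essentially the same route as the paper: the same presentation $\Delta_Y$ built from the unramified cover $Y^{\mathrm{unr}}$, then the same maximal-minor analysis as in Theorem~\ref{thm:EMC}, with every ramified vertex treated like a finite-inertia vertex. Your check of the transpose convention is a harmless extra; the paper writes the upper-left block directly as $C_I^{B'}(D^{B'}-A_\alpha^{B'})$, and this is justified because determinants are invariant under transposition.
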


\begin{proof}
The proof is the finite-level analogue of Theorems \ref{thm:presentation} and \ref{thm:EMC}. Let
\[
Y^{\nr}=X(G,\{(v,1):v\in V_X\},\alpha)
\]
and write $w_v^{\nr}=(v,1)$. Consider
\[
\Delta_Y:
\left(\bigoplus_{v\in V_X}\Zp[G]e_v\right)
\oplus
\left(\bigoplus_{v\in V_X^{\ram}}\bigoplus_{j=1}^{r_v}\Zp[G]f_{v,j}\right)
\longrightarrow
\Div_{\Zp[G]}(Y^{\nr})
\]
defined by
\begin{align*}
\Delta_Y(e_v)&=\Norm{I_v}L_{Y^{\nr}}(w_v^{\nr}),\\
\Delta_Y(f_{v,j})&=(g_{v,j}-1)w_v^{\nr}.
\end{align*}
The natural morphism $Y^{\nr}\to Y$ shows exactly as in the proof of Theorem \ref{thm:presentation} that
\[
\coker(\Delta_Y)\simeq\Pic_p(Y).
\]
Hence $\Fitt_{\Zp[G]}(\Pic_p(Y))$ is generated by the maximal minors of $\Delta_Y$.

As in the proof of Theorem \ref{thm:EMC}, every nonzero maximal minor is indexed by a subset $B'\subseteq V_X^{\ram}$ and a choice of one augmentation column at each $v\in B'$. After reordering, the corresponding matrix is block triangular with upper-left block
\[
C_I^{B'}(D^{B'}-A_\alpha^{B'})
\]
and lower-right block $\operatorname{diag}(g_{v,j_v}-1)_{v\in B'}$. Its determinant is
\[
\ThetaStar_{Y^{B'}/X}(1)\prod_{v\in B'}(g_{v,j_v}-1).
\]
Varying the chosen generators and then $B'$ gives \eqref{eq:finite-Fitt}.
\end{proof}

\begin{remark}\label{rem:all-generators-L}
Theorem \ref{thm:finite-Fitt} expresses every generator of the Fitting ideal in terms of a special value of a modified equivariant Artin--Ihara $L$-polynomial and augmentation ideals coming from ramification. In this sense the entire Fitting ideal is controlled by equivariant $L$-functions.
\end{remark}

\begin{remark}\label{rem:Kurihara-comparison}
Theorem~\ref{thm:finite-Fitt} gives a graph-theoretic analogue of
Kurihara's formula for the Fitting ideal of the minus part of the
Pontryagin dual of a ray class group in an abelian CM extension of a
totally real field. Proved away from $2$ by
Dasgupta--Kakde~\cite[Theorem~3.5]{DK23}, this formula identifies the
Fitting ideal with the Sinnott--Kurihara ideal, constructed from
Stickelberger elements and local factors at ramified primes.
See also Gambheera~\cite[Theorem~1.6]{G25}, which extends the formula
to allow tamely ramified primes in the smoothing set.

In the graph setting, the modified equivariant Ihara values
$\Theta^*_{Y^{B'}/X}(1)$ and the augmentation ideals $\Delta_{I_v}$
provide the corresponding special values and local factors.
The indexing by subsets of ramified vertices also parallels the
description of the Sinnott--Kurihara ideal by generators indexed
by subsets of ramified primes; compare
Dasgupta--Kakde~\cite[Lemma~3.4 and equation~(34)]{DK23}.
Both descriptions determine the Fitting ideal over the integral
group ring while retaining the local ramification data.
\end{remark}

\subsection{Annihilation consequences and the Brumer--Stark type statement}

The finite-level Fitting formula has the following immediate annihilation consequence.

\begin{corollary}[Brumer--Stark type annihilation]\label{cor:immediate-annihilation}
For every subset $B'\subseteq V_X^{\ram}$, one has
\[
\ThetaStar_{Y^{B'}/X}(1)\prod_{v\in B'}\aug{I_v}
\subseteq
\Ann_{\Zp[G]}\!\left(\Pic_p(Y)\right)
\subseteq
\Ann_{\Zp[G]}\!\left(\Pic_p^0(Y)\right).
\]
In particular,
\[
\ThetaStar_{Y/X}(1)\in
\Ann_{\Zp[G]}\!\left(\Pic_p(Y)\right).
\]
\end{corollary}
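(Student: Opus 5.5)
The plan is to deduce the corollary directly from Theorem~\ref{thm:finite-Fitt} and the standard inclusion \eqref{eq:fitt-ann}, followed by the passage from $\Pic_p(Y)$ to its submodule $\Pic_p^0(Y)$.

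First, fix $B'\subseteq V_X^{\ram}$. By \eqref{eq:finite-Fitt}, the ideal $\ThetaStar_{Y^{B'}/X}(1)\prod_{v\in B'}\aug{I_v}$ is one of the summands generating $\Fitt_{\Zp[G]}(\Pic_p(Y))$. It is therefore contained in that Fitting ideal. Since $\Pic_p(Y)$ is finitely presented over $\Zp[G]$ (by the presentation $\Delta_Y$ built in the proof of Theorem~\ref{thm:finite-Fitt}), \eqref{eq:fitt-ann} gives the first inclusion into $\Ann_{\Zp[G]}(\Pic_p(Y))$.

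Second, I will check that $\Pic_p^0(Y)$ is a $\Zp[G]$-submodule of $\Pic_p(Y)$. The degree map $\Div(Y)\to\Z$ is $G$-invariant, so $\Div^0(Y)$ is $G$-stable. Moreover, $\Princ(Y)\subseteq\Div^0(Y)$, since each $L_Y(w)$ has degree zero. Tensoring the exact sequence $0\to\Pic^0(Y)\to\Pic(Y)\to\Z$ with the flat $\Z$-module $\Zp$ keeps it injective, so $\Pic_p^0(Y)\hookrightarrow\Pic_p(Y)$ equivariantly. Anything annihilating a module annihilates every submodule, which gives the second inclusion.

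Finally, the special case is $B'=\varnothing$. Then the product of augmentation ideals is empty, hence equal to $\Zp[G]$, and $Y^{\varnothing}=Y$, so the generator is simply $\ThetaStar_{Y/X}(1)$. There is no genuine obstacle here. The only point needing care is the convention that an empty product of ideals is the unit ideal, so that the $B'=\varnothing$ term of \eqref{eq:finite-Fitt} is exactly the principal ideal $(\ThetaStar_{Y/X}(1))$.
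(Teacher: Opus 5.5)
Your proposal is correct and follows the paper's argument, which likewise deduces the corollary directly from Theorem~\ref{thm:finite-Fitt} and the inclusion \eqref{eq:fitt-ann}. Your extra checks, namely that $\Pic_p^0(Y)$ is a $\Zp[G]$-submodule of $\Pic_p(Y)$ by flatness of $\Zp$, and that the $B'=\varnothing$ summand is the principal ideal generated by $\ThetaStar_{Y/X}(1)$, spell out steps the paper leaves implicit.
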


\begin{proof}
This follows immediately from Theorem \ref{thm:finite-Fitt} and the general inclusion
$\Fitt_R(M)\subseteq \Ann_R(M)$.
\end{proof}

\begin{remark}
When the cover $Y/X$ is unramified, $V_X^{\ram}=\varnothing$ and Theorem \ref{thm:finite-Fitt} reduces to the principal formula
\[
\Fitt_{\Zp[G]}(\Pic_p(Y))=(\ThetaStar_{Y/X}(1)),
\]
in agreement with the unramified results of Kataoka \cite{Kataoka}.
\end{remark}

\section{Examples}\label{sec:examples}

In this section, we calculate the equivariant Fitting ideals and the finite
Picard groups in two examples. As in \cite[Section~6]{GV24} and
\cite[Section~7]{GV26}, we use a power series variable to make the calculations
explicit. The first example has a vertex with nontrivial finite inertia. In
the second example, finite and infinite inertia occur simultaneously.

Throughout this section, let $p$ be an odd prime and put
\[
G=\langle\sigma\rangle\simeq\mathbb Z/p\mathbb Z,
\qquad \Gamma\simeq\mathbb Z_p,\qquad \Gcal=G\times\Gamma,
\qquad \Lambda=\mathbb Z_p[[\Gamma]].
\]
Fix a topological generator $\gamma$ of $\Gamma$. We identify
$\Lambda$ with $\mathbb Z_p[[T]]$ by $\gamma=1+T$, and write
\[
R=\LambdaG=\mathbb Z_p[G][[T]],\qquad
N=1+\sigma+\cdots+\sigma^{p-1}.
\]
In particular, $N\delta=0$ and $(N,\delta)=(p,\delta)$ in $R$.
For each of the towers below, write
\[
M=\Pic_{\LambdaG}(Y_\infty),\qquad
M^0=\varprojlim_n\Pic_p^0(Y_n).
\]
When discussing ordinary Iwasawa invariants, we regard these as
$\Lambda$-modules by restriction of scalars. The symbols $\sigma$, $\gamma$, $N$, and $T$ also denote their images
in the group rings $R_n=\mathbb Z_p[\Gcal_n]$ and $
\Lambda_n=\mathbb Z_p[\Gamma/\Gamma_n]$ whenever appropriate.

\subsection{A vertex with finite inertia}\label{subsec:example-finite-inertia}

Let $X$ have two vertices $v_1,v_2$, one loop $e$ at $v_1$, and one edge $f$
from $v_1$ to $v_2$. Choose the voltage assignment and inertia groups
\[
\wtalpha(e)=\gamma,\qquad \wtalpha(f)=1,\qquad
\wtI_{v_1}=1,\qquad \wtI_{v_2}=G.
\]
The voltages on the reversed edges are determined by inversion. Thus
\[
A=\{v_1\},\qquad B=\{v_2\},\qquad C=\varnothing.
\]
The graph $Y_n$ has vertices
\[
x_{h,i}\quad(h\in\mathbb Z/p\mathbb Z,\ i\in\mathbb Z/q\mathbb Z),
\qquad y_i\quad(i\in\mathbb Z/q\mathbb Z),
\]
with one edge from $x_{h,i}$ to $x_{h,i+1}$ and one edge from $x_{h,i}$ to
$y_i$ for each $(h,i)$. Here the first family consists of loops when $n=0$.
This description shows that every $Y_n$ is connected.

With the ordering $(v_1,v_2)$, the voltage adjacency and valency matrices are
\[
A_{\wtalpha}=
\begin{pmatrix}\gamma+\gamma^{-1}&1\\1&0\end{pmatrix},
\qquad D=\begin{pmatrix}3&0\\0&1\end{pmatrix}.
\]
Notice that 
\[
a:=3-\gamma-\gamma^{-1}
  =1-\frac{T^2}{1+T}
\]
is a unit in $\Lambda$ and in every $\Lambda_n$.
The presentation in Theorem~\ref{thm:presentation} becomes
\begin{equation}\label{eq:example-one-presentation}
R^3\xrightarrow{\displaystyle
\begin{pmatrix}a&-N&0\\-1&N&\sigma-1\end{pmatrix}}
R^2\longrightarrow M\longrightarrow0.
\end{equation}
Its maximal minors are $N(a-1)$, $a(\sigma-1)$, and $-N(\sigma-1)=0$. Consequently,
\begin{equation}\label{eq:example-one-fitting}
\Fitt_R(M)=(N(a-1),\sigma-1)=(\sigma-1,p(a-1))
          =(\sigma-1,pT^2).
\end{equation}
For the middle equality, use $N-p\in(\sigma-1)$.

The two subsets of $B$ give the special values
\[
\ThetaStar_{Y_\infty/X}(1)
=\det\begin{pmatrix}a&-1\\-N&N\end{pmatrix}
=N(a-1),
\qquad
\ThetaStar_{Y_\infty^{\{v_2\}}/X}(1)=a.
\]
Since $\aug{\wtI_{v_2}}=(\sigma-1)$, these recover
\eqref{eq:example-one-fitting} directly from Theorem~\ref{thm:EMC}.
The ideal in \eqref{eq:example-one-fitting} is non-principal. 

One can also determine $M$ itself. Let $x,y$ denote the images of the two
basis vectors of $R^2$ in \eqref{eq:example-one-presentation}. Its relations
are
\[
y=ax,\qquad N(y-x)=0,\qquad (\sigma-1) y=0.
\]
Since $a$ is a unit, these are equivalent to
$(\sigma-1) x=0$ and $pT^2x=0$. Hence
\[
M\simeq R/(\sigma-1,pT^2)
\quad\text{and, as a $\Lambda$-module,}\quad
M\simeq\Lambda/(pT^2).
\]
The degree map is induced by $T\mapsto0$ and sends $x$ to $1$.
Its kernel is $TM$, so
\begin{equation}\label{eq:example-one-degree-zero}
M^0\simeq\Lambda/(pT),\qquad
\charideal_\Lambda(M^0)=(pT).
\end{equation}
In particular, $\mu(M^0)=1$ and $\lambda(M^0)=1$.

At finite level the same calculation gives
\begin{equation}\label{eq:example-one-finite-pic}
\Pic_p(Y_n)\simeq\Lambda_n/(pT^2),
\end{equation}
with trivial $G$-action.For $n\geq1$, let $q=p^n$ and identify $\Lambda_n$ with
$\mathbb Z_p^q$ using the basis $1,\gamma,\ldots,\gamma^{q-1}$.
Multiplication by
$2-\gamma-\gamma^{-1}=-\gamma^{-1}T^2$ is then represented by the Laplacian $L_q$ of the cycle on $q$
vertices. Its Smith normal form is
$\operatorname{diag}
\bigl(\underbrace{1,\ldots,1}_{q-2},q,0\bigr)$.
Since $-\gamma^{-1}$ is a unit in $\Lambda_n$,
\eqref{eq:example-one-finite-pic} gives
\[
\Pic_p(Y_n)
\simeq
\operatorname{coker}_{\mathbb Z_p}(pL_q).
\]
Multiplying a Smith equivalence for $L_q$ by $p$ shows that
the Smith normal form of $pL_q$ is
\[
\operatorname{diag}
\bigl(\underbrace{p,\ldots,p}_{q-2},pq,0\bigr).
\]
Consequently, as $\mathbb Z_p$-modules,
\[
\Pic_p(Y_n)
\simeq
\mathbb Z_p
\oplus(\mathbb Z/p\mathbb Z)^{q-2}
\oplus\mathbb Z/p^{n+1}\mathbb Z
\]
and 
\[
\Pic_p^0(Y_n)
\simeq
(\mathbb Z/p\mathbb Z)^{q-2}
\oplus\mathbb Z/p^{n+1}\mathbb Z.
\]
For $n=0$, the graph is a tree with loops attached, so $\Pic_p^0(Y_0)=0$.
It follows that, in fact for every $n\ge0$,
\begin{equation}\label{eq:example-one-growth}
\operatorname{ord}_p\bigl(|\Pic^0(Y_n)|\bigr)=p^n+n-1.
\end{equation}

\subsection{Finite and infinite inertia}\label{subsec:example-mixed-inertia}

Let $X$ now be the path with vertices $v_2,v_1,v_3$ and edges joining $v_1$
to each of $v_2,v_3$. Give both oriented edges voltage $1$, and set
\[
\wtI_{v_1}=1,\qquad
\wtI_{v_2}=G,\qquad
\wtI_{v_3}=\Gamma.
\]
Thus $A=\{v_1\}$, $B=\{v_2\}$, and $C=\{v_3\}$.
Although all voltages are trivial, every finite graph $Y_n$ is connected.
Indeed, its vertices can be written as
\[
x_{h,i},\qquad y_i,\qquad z_h
\quad(h\in\mathbb Z/p\mathbb Z,\ i\in\mathbb Z/q\mathbb Z),
\]
where $q=p^n$ and its edges join $x_{h,i}$ to $y_i$ and to $z_h$. Therefore $Y_n$ is
obtained from the complete bipartite graph $K_{p,q}$ by subdividing each
edge once.

For the ordering $(v_1,v_2,v_3)$ one has
\[
A_{\wtalpha}=\begin{pmatrix}0&1&1\\1&0&0\\1&0&0\end{pmatrix},
\qquad D=\operatorname{diag}(2,1,1).
\]
The presentation at the top of the tower is
\begin{equation}\label{eq:example-two-presentation}
R^4\xrightarrow{\displaystyle
\begin{pmatrix}
2&-N&0&0\\
-1&N&\sigma-1&0\\
-1&0&0&T
\end{pmatrix}}
R^3\longrightarrow M\longrightarrow0.
\end{equation}
Its maximal minors are $0$, $NT$, $2(\sigma-1) T$, and $0$. Since $p$ is odd,
\begin{equation}\label{eq:example-two-fitting}
\Fitt_R(M)=T(N,\sigma-1)
         =T(p,\sigma-1).
\end{equation}
This also follows directly from Theorem~\ref{thm:EMC}: deleting the row and
column of $v_3$ gives
\[
\ThetaStar_{Y_\infty/X}(1)
=\det\begin{pmatrix}2&-1\\-N&N\end{pmatrix}=N,
\qquad
\ThetaStar_{Y_\infty^{\{v_2\}}/X}(1)=2,
\]
whereas the two inertia ideals are $(\sigma-1)$ and $(T)$.
As $T$ is a non-zero-divisor in $R$ and $(p,\sigma-1)$ requires two
generators, the ideal in \eqref{eq:example-two-fitting} is nonprincipal.

We next describe $M$ as an ordinary Iwasawa module. Write $x,y,z$ for the
images of the basis vectors in \eqref{eq:example-two-presentation}.
Eliminating $x$ by $2x=y+z$ gives
\begin{equation}\label{eq:example-two-relations}
(\sigma-1) y=0,\qquad Tz=0,\qquad py=Nz.
\end{equation}
Put $z_h=\sigma^h z$ for $0\le h<p$. As a $\Lambda$-module, $M$ consequently
has generators $y,z_0,\ldots,z_{p-1}$ and relations
\[
Tz_h=0\quad(0\le h<p),\qquad
py=z_0+\cdots+z_{p-1}.
\]
Eliminating $z_{p-1}$ shows that
\[
M\simeq\Lambda/(pT)\oplus
       \bigl(\Lambda/(T)\bigr)^{p-1}.
\]
Each $z_h$ has degree $1$. Replacing $y$ by $y-z_0$ and $z_h$ by $z_h-z_0$
for $1\le h\le p-2$ therefore gives
\begin{equation}\label{eq:example-two-degree-zero}
M^0\simeq\Lambda/(pT)\oplus
       \bigl(\Lambda/(T)\bigr)^{p-2},\qquad
\charideal_\Lambda(M^0)=(pT^{p-1}).
\end{equation}
In particular, $\mu(M^0)=1$ and $\lambda(M^0)=p-1$.

The finite-level Fitting ideal can be computed just as explicitly. Put
\[
\nu_n=1+\gamma+\cdots+\gamma^{q-1}\in R_n.
\]
At level $n$, the additional principal-divisor relation at $v_3$ must be
retained. After eliminating $x$ it is $qz=\nu_n y$. Thus a presentation of
$\Pic_p(Y_n)$ over $R_n$ is
\[
R_n^4\xrightarrow{\displaystyle
\begin{pmatrix}\sigma - 1&0&p&-\nu_n\\0&T&-N&q\end{pmatrix}}
R_n^2\longrightarrow\Pic_p(Y_n)\longrightarrow0.
\]
Its maximal minors give, in agreement with Theorem~\ref{thm:finite-Fitt},
\begin{equation}\label{eq:example-two-finite-fitting}
\Fitt_{R_n}\bigl(\Pic_p(Y_n)\bigr)
=((\sigma - 1) T,pT,q(\sigma - 1)).
\end{equation}
Indeed, $N(\sigma - 1)=T\nu_n=0$, and the only remaining minor is $pq-N\nu_n$.
Writing $N-p=(\sigma - 1) u$ and $\nu_n-q=Tv$ shows that
\[
N\nu_n-pq=q(\sigma - 1) u+pTv+(\sigma - 1) Tuv,
\]
so this last minor also belongs to the displayed ideal.

There is also a direct calculation of the finite Picard groups.
The principal-divisor relations at the middle vertices are
\[
2x_{h,i}=y_i+z_h.
\]
Since $p$ is odd, we may eliminate these generators by substituting
$x_{h,i}=(y_i+z_h)/2$ into the relations at the remaining vertices.
After multiplying by $2$, these relations become
\[
py_i-\sum_{h=0}^{p-1}z_h=0,
\qquad
qz_h-\sum_{i=0}^{q-1}y_i=0.
\]
These are precisely the principal-divisor relations for $K_{p,q}$.
Consequently,
\[
\Pic_p(Y_n)\simeq\Pic_p(K_{p,q})
\quad\text{as $\mathbb Z_p$-modules}.
\]
For $n\ge1$, elementary row and column operations on a reduced Laplacian of
$K_{p,q}$ give the Smith diagonal
\[
1,\ 1,\
\underbrace{p,\ldots,p}_{q-2},\
\underbrace{q,\ldots,q}_{p-2},\ pq.
\]
Consequently,
\begin{equation}\label{eq:example-two-finite-jac}
\Pic_p^0(Y_n)\simeq
(\mathbb Z/p\mathbb Z)^{q-2}
\oplus(\mathbb Z/p^n\mathbb Z)^{p-2}
\oplus\mathbb Z/p^{n+1}\mathbb Z
\qquad(n\ge1).
\end{equation}
When $n=0$, the graph is a tree and its degree-zero Picard group is trivial.
For every $n\ge0$ we therefore have
\begin{equation}\label{eq:example-two-growth}
\operatorname{ord}_p\bigl(|\Pic^0(Y_n)|\bigr)
=p^n+(p-1)n-1.
\end{equation}

\end{document}